\documentclass[preprint,11pt]{elsarticle}
           \usepackage[T2A]{fontenc}   
           \usepackage[utf8]{inputenc}
                     \usepackage[english]{babel}

\makeatletter
\def\ps@pprintTitle{%
  \let\@oddhead\@empty
  \let\@evenhead\@empty
  \let\@oddfoot\@empty
  \let\@evenfoot\@oddfoot
}
\makeatother

\usepackage{amsfonts, amsmath, amscd}
\usepackage[psamsfonts]{amssymb}
\usepackage{amssymb}

\usepackage{amsmath}
\usepackage{amsfonts}
\usepackage{amssymb}
\usepackage{amsthm}
\usepackage[pdftex,unicode]{hyperref}
\usepackage{indentfirst} 
\usepackage{amscd}
\usepackage[all,cmtip]{xy}
\usepackage{mathtools}
\usepackage{comment}
\usepackage{tikz-cd}
\usetikzlibrary{babel}

\newtheorem{proposition}{Proposition}
\newtheorem{theorem}{Theorem}
\newtheorem*{theorem*}{Theorem}
\newtheorem{lemma}{Lemma}
\newtheorem*{lemma*}{Lemma}
\newtheorem{cor}{Corollary}

\theoremstyle{definition}

\newtheorem{problem}{Problem}

\theoremstyle{remark}
\newtheorem{note}{Remark}
\newtheorem*{note*}{Remark}

  \def\R{\mathbb R}

\def\N{\mathbb N}

\def\om{\omega}

\def\supp{\operatorname{supp}}

\def\be{\beta}

\def\ph{\varphi}
\def\de{\delta}

\def\cl#1{\overline{#1}}
\def\clx#1#2{\overline{#2}^{\,#1}}

\def\bt{{\beta}}
\def\bt{{\beta}\mskip 1}
\def\bt{{\beta}\mskip 0.4mu}

\def\es{\varnothing}

\def\sset#1{\{#1\}}

\def\set#1{\bbset#1\eeset}
\def\bbset#1:#2\eeset{\{#1\,:\,#2\}}

\def\bbsett#1:#2\eesett{\{#1\,:\,\text{#2}\}}

\def\ibbset#1:#2\ieeset{(#1)_{#2}}

\def\cB{{\mathcal B}}

\def\cB{{\mathcal B}}

\def\cU{{\mathcal U}}

\def\B{\mathbb D}

\def\cV{{\mathcal V}}

\newcommand\restrA[2]{{
  \left.\kern-\nulldelimiterspace 
  #1 
  \vphantom{\big|} 
  \right|_{#2} 
  }}

\def\oo#1/{$O_{#1}$}

\def\ep{\varepsilon}

\def\gd/{$G_\delta$}

\def\lrarr{\Leftrightarrow}

\def\si{\sigma}

\def\pini/{{п.и.н.и}}
\def\vum/{{в.у.м}}
\def\cum/{{ч.у.м}}
\def\lum/{{л.у.м}}

\def\term#1{{\it #1}}

\def\cccc/{$ccc$}

\def\:{\colon}

\def\see\cite#1{(see~\cite{#1})}
\def\seee\cite[#1]#2{(see~\cite[#1]{#2})}

\def\B{\mathcal{B}}
\def\Ba{\mathcal{B}a}

\def\bt{\beta}
\def\om{\omega}
\def\si{\sigma}
\def\cB{\mathcal{B}}

\begin{document}

\begin{frontmatter}

\title{A New Approach to Universal Measurability}

\author{Reznichenko E.A.}
\address{Department of Mathematics, Lomonosov Moscow State University, Moscow, Russia}
\author{Sadovnichiy Yu.V.}
\address{Department of Mathematics, Lomonosov Moscow State University, Moscow, Russia}

\begin{abstract}
V. Fedorchuk, A. Chizogidze, and T. Banakh in 2003 and V. Bogachev in 2024 posed the following questions: (i) is it true that $P_\tau(X)$ is $C$-embedded in $P_\si(X)$; (ii) is it true that $P_R(X)$ is $C$-embedded in $P_R(\beta X)$ if and only if $X$ is pseudocompact, where $P_\si$, $P_\tau$, and $P_R$ are the functors of probability measures on the space $X$ that are, respectively, $\si$-additive on the Baire $\si$-algebra, $\tau$-additive, and Radon? The answers to these questions are negative. However, if instead of probability measures we consider the corresponding alternating measures $M_\si$, $M_\tau$, and $M_R$, the situation changes. It is proved that (i) $M_\tau(X)$ is $C$-embedded in $M_\si(X)$; (ii) $M_R(X)$ is $C$-embedded in $M_R(\beta X)$ if and only if $X$ is pseudocompact.

The question of $C$-embedding of measure spaces is an extension of the question of coincidence of measure spaces, which is a development of the classical concepts of universally measurable and universal 
measure zero sets. A general theorem is obtained, which leads to the mentioned results.
\end{abstract}

\begin{keyword} 
measure spaces
\sep
probability measures
\sep 
Radon measures
\sep 
universally measurable sets
\sep 
universal measure zero sets
\end{keyword}

\end{frontmatter}

\section{Introduction}\label{sec:intro}

This paper considers various ways of extending the concept of universally measurable and universal measure zero sets from separable metrizable spaces to the class of Tychonoff spaces.

Let $X\subset \R$.\footnote {In a slightly more general setting, we consider subsets of Polish spaces (=complete separable metric spaces).} A set $X$ is called {\em universally measurable} if $X$ is $\mu$-measurable for any finite measure $\mu$ on the Borel sets of the line.
A set $X$ is called {\em universal measure-zero} if $X$ has measure zero with respect to
any continuous probability measure on Borel sets.

The classical extension of these concepts from the class of separable metrizable spaces to the class of Hausdorff spaces is Radon spaces and universally negligible spaces. These classes of spaces are inconvenient for applications, since not every compactum is a Radon space.

Given Ritz's theorem, for applications in analysis, functional analysis, and topology, it is more convenient to consider measures on the Baire $\si$-algebra of Tychonoff spaces. In \cite{BanakhChigogidzeFedorchuk2003}, the concept of universally measurable spaces was extended from separable metrizable spaces to the class of Tychonoff spaces. In this case, all Hausdorff compact spaces turned out to be universally measurable.

Further, we assume that all spaces are Tychonoff.

\subsection{Measures on a Tychonoff Space}

For a space $X$, we denote by $\B(X)$ and $\Ba(X)$ the $\si$-algebras of Borel and Baire sets of $X$.

For a compact space $K$, let $M(K)$ denote the dual of $C(K)$ with the weak${}^\ast$ topology, where $C(K)$ is the Banach space of continuous functions on $K$ with the $\sup$-norm.
By the fundamental theorem of Ritz, the functionals $\mu$ in $M(X)$ are identified with Borel regular measures on $K$.
Denote by $U(K)$ the unit ball in $M(K)$, by $M^+(K)$ the nonnegative functionals in $M(K)$, and by $P(K)=\{\mu\in M^+(K): \mu(K)=1\}$ the probability measures on $K$.

For a Tychonoff space $X$, the most important measure spaces on $X$ are: $M_a(X)$ --- finitely additive measures on $\Ba(X)$; $M_\si(X)$ --- countably additive measures on $\Ba(X)$; $M_\tau(X)$ --- $\tau$-additive measures on $\B(X)$; $M_R(X)=M(X)$ --- Radon measures on $\B(X)$; $M_\bt(X)$ --- compactly supported measures on $\B(X)$; $M_d(X)$ --- discrete measures on $\B(X)$.
These measure spaces are identified with subsets of $M(\bt X)$: for $k\in\sset{d,\be,R,\tau,\si,a}$ and $\mu\in M(\be X)$, the inclusion $\mu\in M_k(X)$ holds if the condition $(M_k)$ is satisfied:
\begin{itemize}
\item[$(M_d)$]
$\mu$ is concentrated on some countable subset $X$;
\item[$(M_\be)$]
$\mu$ is concentrated on some compact subset $X$;
\item[$(M_R)$]
$\mu$ is concentrated on some $\si$-compact subset $X$;
\item[$(M_\tau)$]
$|\mu|(K)=0$ for all compact $K\subset \bt X\setminus X$;
\item[$(M_\si)$]
$|\mu|(K)=0$ for all compact $K\subset \bt X\setminus X$ of type $G_\de$ in $\bt X$;
\item[$(M_a)$]
$\mu$ is arbitrary.
\end{itemize}
Note that
\[
M_d(X)\subset M_R(X),\ \ \ \ \ \ M_\be(X)\subset M_R(X) \subset M_\tau(X) \subset M_\si(X) \subset M_a(X)=M(\bt X).
\]

\subsection{Generalization of universally measurable and universal measure zero Sets}
\label{ssec:gum}

For a set $A\subset \R$, the classical notions of {\em universally measurable} and {\em universal measure zero} sets $A$ are topological and are characterized as $M_\si(A)=M_R(A)$ and $M_\si(A)=M_d(A)$.

For $p,q\in\{a,\si,\tau,R,\bt,d\}$, the space $X$ is called
\begin{itemize}
\item
{\em universally $[p,q]$-measurable} if $M_p(X)=M_q(X)$;
\item
{\em universally $p$-measurable} if $M_p(X)=M_R(X)$;
\item
{\em universally $p$-measure zero} if $M_p(X)=M_d(X)$.
\end{itemize}

Section 2 of the paper \cite{BanakhChigogidzeFedorchuk2003} is devoted to universally $[\si,\tau]$-measurable and universally $\si$-measurable spaces (which are called universally measurable in \cite{BanakhChigogidzeFedorchuk2003}).

\begin{note}\label{n:1}
Let us formulate known results on this topic.

\smallskip
(i)
$K$-Analytic (in particular, Souslin) spaces are universally $\si$-measurable \cite[7.14.125]{Bogachev2007Measure_theory}.

\smallskip
(ii)
A space $X$ is universally $\tau$-measurable if and only if $X$ is $\mu$-measurable in $\bt X$ for any measure $\mu\in P(\bt X)$ \cite[Example 7.14.22]{Bogachev2007Measure_theory}.

\smallskip
(iii)
A paracompact space $X$ is universally $[\si,\tau]$-measurable if and only if $X$ does not contain an Ulam-measurable discrete closed subset
(see~\cite[Theorem I.28]{Varadarajan1965measures} and \cite[Section 2]{BanakhChigogidzeFedorchuk2003}). In particular, a Lindelöf space is a universally $[\si,\tau]$-measurable space \cite[Corollary 2.2]{BanakhChigogidzeFedorchuk2003}.

\smallskip
(iv)
For a space $X$, the following conditions are equivalent \cite[Theorem 1.22]{Reznichenko2025betap}:
(a) $X$ is pseudocompact;

(b) $X$ is a universally $[\si,a]$-measurable space.
\end{note}

The following assertion is a strengthening of Remark \ref{n:1}(ii).

\begin{theorem}\label{t:gum:1}
For a space $X$, the following statements are equivalent:
\begin{itemize}
\item[{\rm (i)}]
$X$ is universally $\tau$-measurable;
\item[{\rm (ii)}]
$X$ is $\mu$-measurable in $\bt X$ for any measure $\mu\in P(\bt X)$;
\item[{\rm (iii)}]
there exists an embedding of space $X$ into some space $Y$ such that
$X$ is $\mu$-measurable in $Y$ for any measure $\mu\in P(\bt Y)$;
\item[{\rm (iv)}]
for any embedding of space $X$ into any space $Y$,
$X$ is $\mu$-measurable in $Y$ for any measure $\mu\in P(\bt Y)$.
\end{itemize}
\end{theorem}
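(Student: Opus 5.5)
We will prove the cycle (i)$\Rightarrow$(iv)$\Rightarrow$(ii)$\Rightarrow$(iii)$\Rightarrow$(i). The equivalence (i)$\Leftrightarrow$(ii) is Remark~\ref{n:1}(ii), but the scheme below does not need it. Throughout, for $\mu\in P(\bt Y)$ and $X\subset Y\subset\bt Y$, ``$X$ is $\mu$-measurable in $Y$'' is read as follows: there are Borel sets $B\subset B'$ of $\bt Y$ with $B\cap Y\subset X\subset B'\cap Y$ and $\mu^*\bigl((B'\setminus B)\cap Y\bigr)=0$, where $\mu^*$ is the outer measure.

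The implications (iv)$\Rightarrow$(ii) and (ii)$\Rightarrow$(iii) are immediate. For the first, apply (iv) to the embedding $X\subset Y=\bt X$. For the second, take $Y=\bt X$ in (iii).

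\emph{(i)$\Rightarrow$(iv).} Let $X\subset Y$ and $\mu\in P(\bt Y)$. Define the trace measure $\nu(A)=\mu^*(A)$ for Borel $A\subset X$; this is the usual restriction of a Radon measure to an arbitrary subset. It is a finite nonnegative Borel measure on $X$, and it is $\tau$-additive because $\mu$ is $\tau$-additive and $\mu^*$ is continuous along increasing nets of relatively open sets. Hence $\nu\in M_\tau(X)$, and by (i) $\nu\in M_R(X)$. So $\nu$ is concentrated on a $\si$-compact set $S=\bigcup_n K_n\subset X$. The set $S$ is $\si$-compact, hence Borel in $\bt Y$, and
\[
\mu^*(X\setminus S)=\nu(X\setminus S)=0 .
\]
Thus $X=S\cup N$ with $\mu^*(N)=0$, and $X$ is $\mu$-measurable in $Y$: take $B=S$, and let $B'\supset N$ be a Borel $G_\de$-hull of $N$ of measure $0$, united with $S$.

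\emph{(iii)$\Rightarrow$(i).} This is the step where the main care is needed. Let $X\subset Y$ be as in (iii). It suffices to treat $\nu\in M_\tau(X)$ with $\nu\ge0$, using the Jordan decomposition. Let $f\colon\bt X\to\bt Y$ be the continuous extension of the embedding, and put $\lambda=f_*\nu\in M^+(\bt Y)$. Since $\nu$ lives on $X$ in the $\tau$-additive sense and $f|_X$ is an embedding into $Y$, the measure $\lambda$ has full outer measure on $X$: every Borel set $A\subset\bt Y$ with $A\cap X=\es$ satisfies $\lambda(A)=\nu(f^{-1}A)=0$. To check this, use condition $(M_\tau)$ together with inner regularity of the Radon measure $\nu$ on $\bt X$: a compact subset of $f^{-1}(A)$ lies in $\bt X\setminus X$, so it has $\nu$-measure zero.

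By (iii), after normalising, there are Borel $B\subset B'$ in $\bt Y$ with $B\cap Y\subset X\subset B'\cap Y$ and $\lambda^*((B'\setminus B)\cap Y)=0$. The delicate point is to obtain a Borel set of full $\lambda$-measure lying inside $X$, rather than merely inside $B\cap Y$. I would first show that $Y$ itself has full outer $\lambda$-measure and then replace $B$ by a Borel subset of $B\cap Y$ of full measure; if this fails, I would instead use the same argument for $\bt Y\setminus Y$ as was used for $\bt Y\setminus X$ above. Once such a Borel set $B_0\subset X$ with $\lambda(B_0)=\lambda(\bt Y)$ is available, Radon regularity of $\lambda$ gives compacts $K_n\subset B_0\subset X$ with $\lambda(\bigcup_nK_n)=\lambda(\bt Y)$. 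Since $f|_X$ is a homeomorphism onto its image, the sets $f^{-1}(K_n)\cap X$ are compact subsets of $X$. By the full-outer-measure property, $\nu$ is concentrated on their union. Hence $\nu\in M_R(X)$, which proves (i).
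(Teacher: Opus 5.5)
Your steps (i)$\Rightarrow$(iv)$\Rightarrow$(ii)$\Rightarrow$(iii) are fine. The step (iii)$\Rightarrow$(i), however, breaks exactly at the point you call delicate, and neither fallback closes it.

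What you can prove is that $X$, hence also $Y$ and $B\cap Y$, has full \emph{outer} $\lambda$-measure. To find a Borel set of full measure inside $B\cap Y\subset X$ you need full \emph{inner} measure, $\lambda_*(X)=\lambda(\beta Y)$. Rerunning the argument for $\beta Y\setminus Y$ only shows that Borel sets missing $Y$ are null, which is again a statement about outer measure.

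Worse, the step is false under your reading of ``$\mu$-measurable in $Y$'' as measurability for the trace of $\mu$ on $Y$. Take $Y=X$ and $B=B'=\beta Y$: condition (iii) then holds for every space, so (iii)$\Rightarrow$(i) would make every space universally $\tau$-measurable. A Bernstein set $X\subset[0,1]$ is a counterexample. The trace of Lebesgue measure on $X$ is a $\tau$-additive probability measure, because $X$ is separable metrizable and has full outer measure. It is atomless, and every compact subset of $X$ is countable, so it is not concentrated on any $\sigma$-compact subset of $X$.

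The missing idea is the reading of (iii) and (iv): $X$ must be $\mu$-measurable as a subset of $\beta Y$. That is, there are Borel sets $B\subset X\subset B'$ of $\beta Y$ with $\mu(B'\setminus B)=0$; for $Y=\beta X$ this is exactly (ii). With this reading your argument goes through:
\begin{itemize}
\item $\beta Y\setminus B'$ is a Borel set disjoint from $X$, so $\lambda(B)=\lambda(B')=\lambda(\beta Y)$, and $B$ itself is the Borel subset of $X$ of full measure you were looking for.
\item Inner regularity gives compacta $K_n\subset B$ of full measure.
\item $f^{-1}(K_n)\setminus X$ is a Borel subset of $\beta X\setminus X$, hence $\nu$-null by $(M_\tau)$. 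It is in fact empty, since $f^{-1}(f(X))=X$ whenever $f$ is continuous on $\beta X$ and $f|_X$ is a homeomorphic embedding of the dense subspace $X$.
\end{itemize}
Your proof of (i)$\Rightarrow$(iv) already gives this stronger form, since $S\subset X\subset S\cup N'$ with $\mu(N')=0$ in $\beta Y$. The steps (iv)$\Rightarrow$(ii)$\Rightarrow$(iii) are unaffected.
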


The Rudin--Knowles theorem \cite{Rudin1957,Knowles1967} states that for a compact space $X$,
any Radon measure on $X$ is discrete (i.e., $M_R(X)=M_d(X)$, which is equivalent to $X$ being of universally $R$-measure zero) if and only if $X$ is a scattered compactum. The Rudin--Knowles theorem implies the following assertion.

\begin{proposition}\label{p:umnR}
A space $X$ is universally $R$-measure zero if and only if every compact subset of $X$ is scattered.
\end{proposition}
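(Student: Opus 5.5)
We must show: $M_R(X)=M_d(X)$ if and only if every compact subset of $X$ is scattered. The plan is to reduce both directions to the Rudin--Knowles theorem, applied to compact subsets $K\subset X$. We use that for compact $K\subset X$ the inclusion $K\subset X\subset \bt X$ identifies $M(K)$ with the measures in $M(\bt X)$ concentrated on $K$. Under this identification $M_R(K)=M(K)\subset M_R(X)$, and $M_d(K)\subset M_d(X)$.

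\emph{Necessity.} Suppose $M_R(X)=M_d(X)$ and let $K\subset X$ be compact. Take any $\mu\in M(K)$, viewed as a measure on $\bt X$ concentrated on $K$. Then $\mu\in M_R(X)=M_d(X)$, so $\mu$ is concentrated on a countable set $C\subset X$. Hence $\mu$ is also concentrated on $C\cap K$, which is a countable subset of $K$, so $\mu$ is discrete as a measure on $K$. Thus $M(K)=M_d(K)$, and by the Rudin--Knowles theorem $K$ is scattered.

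\emph{Sufficiency.} Suppose every compact subset of $X$ is scattered and let $\mu\in M_R(X)$. By $(M_R)$, $\mu$ is concentrated on a $\si$-compact set $\bigcup_n K_n$ with each $K_n\subset X$ compact. Passing to $|\mu|$ (or using the Jordan decomposition), write $\mu=\sum_n \mu_n$, where
\[
\mu_n=\mu|_{K_n\setminus\bigcup_{i<n}K_i}.
\]
Each $\mu_n$ is a Borel measure on $\bt X$ concentrated on the compact set $K_n$; its restriction to $K_n$ is a regular Borel measure, hence an element of $M(K_n)$. Since $K_n$ is scattered, Rudin--Knowles gives a countable $C_n\subset K_n$ on which $\mu_n$ is concentrated. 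Then $\mu$ is concentrated on the countable set $\bigcup_n C_n\subset X$, so $\mu\in M_d(X)$.

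The only delicate point is the bookkeeping between measures on $\bt X$ and on $K$. Concretely, we must check that restricting a Radon measure on $\bt X$ to a Borel subset of $K_n$ yields a Radon measure on $K_n$ (which holds since restrictions of Radon measures to Borel sets are Radon). We must also check that "concentrated on" is preserved when passing between $K$ and $\bt X$. Both are routine given the Riesz identification fixed in the introduction.
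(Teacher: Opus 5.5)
Your proof is correct and follows the route the paper indicates. The paper gives no argument beyond saying the proposition follows from the Rudin--Knowles theorem, and you supply exactly that reduction: Rudin--Knowles applied to each compact $K\subset X$ gives necessity, and Rudin--Knowles applied to the disjoint Borel pieces $K_n\setminus\bigcup_{i<n}K_i$ of a $\sigma$-compact carrier gives sufficiency.
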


\begin{proposition}\label{p:umnR-ast}
Let $p\in\{a,\sigma,\tau\}$.
A space $X$ is universally of $p$-measure zero if and only if $X$ is universally $p$-measurable and
every compact subset of $X$ is scattered.
\end{proposition}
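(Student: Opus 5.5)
The argument is a bookkeeping exercise with the chain of inclusions $M_d(X)\subset M_R(X)\subset M_\tau(X)\subset M_\si(X)\subset M_a(X)$, combined with Proposition~\ref{p:umnR}. Fix $p\in\{a,\si,\tau\}$; in each case $M_R(X)\subset M_p(X)$.

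\emph{Sufficiency.} Suppose $X$ is universally $p$-measurable and every compact subset of $X$ is scattered. Then $M_p(X)=M_R(X)$ by the first assumption. Proposition~\ref{p:umnR} turns the second assumption into $M_R(X)=M_d(X)$. Hence $M_p(X)=M_d(X)$, so $X$ is universally of $p$-measure zero.

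\emph{Necessity.} Suppose $M_p(X)=M_d(X)$. Since $M_d(X)\subset M_R(X)\subset M_p(X)$, we get
\[
M_d(X)\subset M_R(X)\subset M_p(X)=M_d(X).
\]
All these sets are therefore equal. In particular $M_p(X)=M_R(X)$, which says $X$ is universally $p$-measurable, and $M_R(X)=M_d(X)$. By Proposition~\ref{p:umnR}, the latter means every compact subset of $X$ is scattered.

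There is no real obstacle. The only points to check are the inclusions $M_d(X)\subset M_R(X)\subset M_p(X)$. These follow directly from the defining conditions $(M_k)$ as subsets of $M(\bt X)$:
\begin{itemize}
\item a countable set is $\si$-compact, so $M_d(X)\subset M_R(X)$;
\item a measure concentrated on a $\si$-compact $S\subset X$ gives zero $|\mu|$-measure to every compact $K\subset\bt X\setminus X$, because $K\cap S=\es$, so $M_R(X)\subset M_\tau(X)$;
\item $M_\tau(X)\subset M_\si(X)\subset M_a(X)$ holds trivially.
\end{itemize}
All substantive content sits in Proposition~\ref{p:umnR}, which rests on the Rudin--Knowles theorem and is assumed.
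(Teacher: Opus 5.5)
Your proof is correct and is essentially the argument the paper intends. The paper states this proposition without proof, as an immediate consequence of the inclusion chain $M_d(X)\subset M_R(X)\subset M_\tau(X)\subset M_\si(X)\subset M_a(X)$ noted in the introduction together with Proposition~\ref{p:umnR}; your re-verification of those inclusions from the conditions $(M_k)$ is a correct extra check.
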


\subsection{Functional Generalizations of universally measurable and Universally Measure Zero Sets}

Recall that a subset $Y$ of $X$ is {\em $C$-embedded} ({\em $C^\ast$-embedded}) in $X$ if every (bounded) continuous function on $Y$ extends to a continuous function on $X$.

We consider the subfunctors $M^+_p(X)=M_p(X)\cap M^+(\bt X)$, $U_p(X)=M_p(X)\cap U(\bt X)$, $U^+_p(X)=U_p(X)\cap M^+(X)$, $P_p(X)=M_p(X)\cap P(\bt X)$ of non-negative measures, measures in norm not exceeding one, probability measures, respectively, of the functor $M_p(X)$.
For
$p,q\in\{a,\si,\tau,R,\bt,d\}$ and
$Q\in \{M,M^+,U,U^+,P\}$, $X$ is called
\begin{itemize}
\item
{\em universally $Q_{[p,q]}$-measurable ($Q^*_{[p,q]}$-measurable)} if $Q_p(X)$ is $C$-embedded ($C^\ast$-embedded) in $Q_q(X)$;
\item
{\em universally $Q_{p}$-measurable ($Q^*_{p}$-measurable)} if $X$ is universally $Q_{[R,p]}$-measurable ($Q^\ast_{[R,p]}$-measurable);
\item
{\em universally $Q_{p}$-measure ($Q^*_{p}$-measure) zero} if $X$ is universally $Q_{[d,p]}$-measurable ($Q^\ast_{[d,p]}$-measurable).
\end{itemize}

Using the introduced concepts, some problems in measure theory and topology (see \cite[Question 1.21, Question 1.29]{Reznichenko2025betap}, \cite[Section 5, Problem 5.5]{BanakhChigogidzeFedorchuk2003}, \cite{Bogachev2024-1ru,Bogachev2024dan-ru}) are special cases of the following general question.

\smallskip
\begin{problem}
For $p,q\in\{f,\si,\tau,R,\bt,d\}$ and $Q\in \{M,M^+,U,U^+,P\}$, describe universally
${[a,b]}$-measurable, universally $Q_{[a,b]}$-measurable, and universally $Q^\ast_{[a,b]}$-measurable spaces.
\end{problem}

In \cite[Section 5]{BanakhChigogidzeFedorchuk2003}, the problem is posed: to describe universally $P_{[\si,\tau]}$-measurable and universally $P_\si$-measurable spaces.
And the question arises \cite[Problem 5.5]{BanakhChigogidzeFedorchuk2003}: is it true that every space $X$ is universally $P_{[\si,\tau]}$-measurable?
In \cite{Reznichenko2025betap}, a negative answer is given to this question.

In \cite{Bogachev2024-1ru,Bogachev2024dan-ru}, universally $P_{a}^\ast$-measurable spaces are studied, and it is proved that if $X$ is universally $P_{a}^\ast$-measurable space, then $X$ and $P(X)$ are pseudocompact spaces.
In \cite[Theorem 1.9]{Reznichenko2025betap}, a converse of Bogachev's theorem was obtained: a space $X$ is universally $P_{a}^\ast$-measurable if and only if $P(X)$ is a pseudocompact space.
The question was posed: is it true that if $X$ is pseudocompact, then $X$ is universally $P_{a}^\ast$-measurable? In \cite{Reznichenko2025betap}, a negative answer is given to this question (see Examples 1.13 and 1.15 of \cite{Reznichenko2025betap}).

\begin{note}
Let us formulate known results on this topic.

\smallskip
(i)
If $X$ is a universally $\si$-measurable space and the finite powers $X^n$ of $X$ have a countable Souslin number (in particular, $X$ is a Souslin space), then $X^\kappa$ is a universally $P_\si$-measurable space for any cardinal $\kappa$ \cite[Corollary 5.2]{BanakhChigogidzeFedorchuk2003}.

\smallskip
(ii)
Any $AE(0)$-space is a universally $P_\si$-measurable space \cite[Corollary 5.3]{BanakhChigogidzeFedorchuk2003}.

\smallskip
(iii)
For a space $X$, the following conditions are equivalent \cite[Theorem 1.22]{Reznichenko2025betap}:
(a) $X$ is pseudocompact;
(b) $X$ is a universally $[\si,a]$-measurable space;
(c) $X$ is a universally $P_{[\si,a]}^\ast$-measurable space.

\smallskip
(iv)
For a space $X$, the following conditions are equivalent \cite[Theorem 1.27]{Reznichenko2025betap}:
(a) $X$ is pseudo $\om$-bounded;
(b) $X$ is a universally $P_{[\be,a]}^\ast$-measurable space.

\smallskip
(v)
For a space $X$, the following conditions are equivalent \cite[Theorem 1.9]{Reznichenko2025betap}:
(a) $P(X)$ is pseudocompact;
(b) $X$ is a universally $P_{a}^\ast$-measurable space.

\smallskip
(vi)
For a space $X$ with precaliber $\om_1$, the following conditions are equivalent \cite[Theorem 1.28]{Reznichenko2025betap}:
(a) $P_\tau(X)$ is pseudocompact;
(b) $X$ is a universally $P_{[\tau,a]}^\ast$-measurable space.
\end{note}

\subsection{Alternating Measures}

Probability measure functors have the best categorical properties. Other measure functors are also important \cite{Koumoullis1981, Sadovnichii1999, SadovnichiiFedorchuk1999, Sadovnichii2000, Sadovnichii2007, SadovnichiiFedorchuk2008, Sadovnichii2020}
and are more convenient for some applications.

A subset $M$ of a space $X$ is called {\em $G_\de$-dense} if $M$ intersects every nonempty $G_\de$-set (=the intersection of countably many open sets) in $X$.

A continuous mapping of compact spaces $f: X\to Y$ extends to a continuous mapping
\[
M(f): M(X)\to M(Y),\ M(f)(\mu)(h) = \mu(h \circ f)\text{ for }h\in C(Y).
\]
A continuous mapping of spaces $f: X\to Y$ extends to a continuous mapping $\bt f: \bt X\to \bt Y$ of Stone–Cech extensions.
Then
\[
M(\bt f)(M_p(X))\subset M_p(Y),
\]
where $p\in\{a,\si,\tau,R,\bt,d\}$.

The following theorem is the main result of the paper.

\begin{theorem}\label{t:main:1}
Let $X$ be a space and $p,q\in\{a,\si,\tau,R,\bt,d\}$. The following conditions are equivalent:
\begin{enumerate}
\item[{\rm (i)}]
$X$ is a universally $M_{[p,q]}$-measurable space;
\item[{\rm (ii)}]
$X$ is a universally $M^\ast_{[p,q]}$-measurable space;
\item[{\rm (iii)}]
$M_p(X)$ is $G_\de$-dense in $M_{q}(X)$;
\item[{\rm (iv)}]
For a measure $\mu \in M_q(X)$ and a sequence of functions $(h_n)_n\subset C(\bt X)$, there exists $\nu\in M_p(X)$ such that $\mu(h_n)=\nu(h_n)$ for all $n$;
\item[{\rm (v)}]
\[
M(\bt f)(M_q(X))\subset M_p(Y),
\]
for any separable metric space $Y$ and continuous surjective mapping $f: X\to Y$.
\end{enumerate}
\end{theorem}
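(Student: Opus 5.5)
The plan is to go around the cycle (i)$\Rightarrow$(ii)$\Rightarrow$(iii)$\Rightarrow$(i), and to attach (iv) and (v) to (iii) separately. Throughout, $M_p(X)\subset M_q(X)$ (otherwise the embedding statements make no sense), and $M(\bt X)$ carries the weak$^\ast$ topology, i.e.\ it sits inside $\R^{C(\bt X)}$.

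\textbf{(iii)$\Leftrightarrow$(iv).} Fix $\mu\in M_q(X)$ and a sequence $(h_n)_n\subset C(\bt X)$. The set
\[
F=\set{\nu\in M_q(X): \nu(h_n)=\mu(h_n)\text{ for all }n}
\]
is a $G_\de$ subset of $M_q(X)$ containing $\mu$. Conversely, every $G_\de$ set containing $\mu$ contains such an $F$: each of its open sets contains a basic weak$^\ast$ neighbourhood of $\mu$, which is determined by finitely many functions, and we collect all of them into one sequence. Hence (iii), meaning $M_p(X)\cap F\neq\es$ for every such $F$, is exactly (iv).

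\textbf{(iv)$\Leftrightarrow$(v).} We use the map $g=(h_n)_n\colon \bt X\to\R^\om$ and its restriction $f=g|_X\colon X\to Y=g(X)$, where $Y$ is separable metric. Every map onto a separable metric space arises this way, by composing with an embedding of $Y$ into the Hilbert cube.

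For (iv)$\Rightarrow$(v), enlarge $(h_n)$ by all finite products of the $h_n$; this keeps the family countable. Pick $\nu\in M_p(X)$ agreeing with $\mu$ on the enlarged family. By Stone--Weierstrass, $M(g)\nu=M(g)\mu$ on the compact metric space $\cl{g(\bt X)}$. Since $M(\bt f)(M_p(X))\subset M_p(Y)$ (the functoriality stated before the theorem), we get $M(\bt f)\mu\in M_p(Y)$.

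For (v)$\Rightarrow$(iv), the image $M(\bt f)\mu$ lies in $M_p(Y)$, so we need a lifting lemma: for a continuous surjection $f$ of $X$ onto a separable metric space, $M(\bt f)$ maps $M_p(X)$ onto $M_p(Y)$.
\begin{itemize}
\item For $p=a$ this holds because $\bt f$ is onto and the induced map of measure spaces on compacta is onto (Hahn--Banach).
\item For $p=d$ it holds by lifting atoms pointwise.
\item For $p\in\{\be,R\}$ one must lift a measure carried by a compact $C\subset Y$ to a Radon measure on $X$. I would try a universally measurable selection of $f^{-1}$ over $C$, or else an approximation argument through the compact sets $\cl{f^{-1}(C)}^{\bt X}$.
\item For $p\in\{\tau,\si\}$ we have $M_\tau(Y)=M_\si(Y)$ since $Y$ is separable metric; the lift is again taken in $M_a(X)$, and one checks that it can be chosen to vanish on the relevant (Baire) compacta of $\bt X\setminus X$.
\end{itemize}
This lifting lemma is the first main obstacle.

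\textbf{(i)$\Rightarrow$(ii)} is trivial.

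\textbf{(ii)$\Rightarrow$(iii).} Suppose some $F$ as above misses $M_p(X)$. Put
\[
\ph(\nu)=\sum_n 2^{-n}\min\bigl(1,|\nu(h_n)-\mu(h_n)|\bigr),
\]
so that $\ph>0$ on $M_p(X)$ and $\ph(\mu)=0$. The function $\sin(1/\ph)$ is bounded and continuous on $M_p(X)$. Now $M_p(X)$ is convex and dense in $M_q(X)$, since $M_d(X)$ is weak$^\ast$ dense. So along segments in $M_p(X)$ accumulating at $\mu$, $\ph$ sweeps through intervals $(0,\ep)$. Hence $\sin(1/\ph)$ has no continuous extension at $\mu$, contradicting (ii).

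\textbf{(iii)$\Rightarrow$(i).} Given a continuous $\psi\colon M_p(X)\to\R$, I would first show that $\psi$ depends on countably many coordinates: there exist $(h_n)$ such that $\psi(\nu)=\psi(\nu')$ whenever $\nu(h_n)=\nu'(h_n)$ for all $n$. This should follow by a Glicksberg/Ross--Stone type argument applied to the compact pieces $nU(\bt X)\cap\cl{M_p(X)}$, using that $M_p(X)$ is convex and $G_\de$-dense. By (iv), the projection $\pi(\nu)=(\nu(h_n))_n$ satisfies $\pi(M_p(X))=\pi(M_q(X))$. We then define the extension by $\tilde\psi(\mu)=\psi(\nu)$ for any $\nu\in M_p(X)$ with $\pi\nu=\pi\mu$. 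The second main obstacle is to prove that the factor map $\pi(M_p(X))\to\R$ is continuous, so that $\tilde\psi$ is continuous on $M_q(X)$. I would try to obtain this by refining the countable family so that $\psi$ becomes uniformly continuous on bounded sets with respect to the metric induced by $\pi$.
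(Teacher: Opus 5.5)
\textbf{What works.} Your (iii)$\Leftrightarrow$(iv) is the paper's argument. Your (ii)$\Rightarrow$(iii), via $\sin(1/\varphi)$ along segments of the dense convex set $M_p(X)$, is correct. It is also more elementary than the paper's route, which gets (i)$\Leftrightarrow$(ii)$\Leftrightarrow$(iii) all at once from Theorem~\ref{t:lcs:c-emb} (perfect $\kappa$-normality of $M_q(X)$ plus results of Arhangel'skii--Tkachenko).

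\textbf{Gap in (iii)$\Rightarrow$(i).} This step is not proved. Your two ``obstacles'' are its entire content, and the tools you suggest do not supply them. Uniform continuity on bounded sets cannot be arranged in general, because it would extend $\psi$ continuously to the weak$^\ast$ closures of balls. That already fails for $X=\omega$, $p=q=R$: apply your own $\sin(1/\varphi)$ at $\mu=\delta_u$, $u\in\beta\omega\setminus\omega$, with $h_n$ the indicator of the closure of $\{k:k\ge n\}$.

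The missing idea is to view $M_q(X)$, via $\nu\mapsto(\nu(h))_h$, as a dense subspace of a power $\mathbb{R}^\Gamma$. By the ccc of $\mathbb{R}^\Gamma$, the closure of an open set depends on countably many coordinates, so closures of open subsets of $M_q(X)$ are zero sets. Then (iii) gives (i) directly:
\begin{itemize}
\item For $a<b$, pick open $W_1,W_2\subset M_q(X)$ with $W_1\cap M_p(X)=\{\psi<a\}$ and $W_2\cap M_p(X)=\{\psi>b\}$.
\item Density gives $\overline{W_1}\cap\overline{W_2}\cap M_p(X)=\varnothing$. This set is a zero set, so it is empty by (iii).
\item Hence $\overline{W_1},\overline{W_2}$ are disjoint zero sets, and Urysohn's extension theorem gives $C^\ast$-embedding.
\item $G_\delta$-density upgrades this to $C$-embedding: extend $\arctan\psi$; the set where the extension reaches $\pm\pi/2$ is a zero set missing $M_p(X)$, hence empty.
\end{itemize}
The same ccc fact, applied to $\{\psi<a\}$ and $\{\psi>b\}$ for rational $a<b$, proves your factorization with a continuous factor (Theorem~\ref{t:lcs:}). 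After that, your use of (iv) does finish the argument.

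\textbf{Gap in (iv)$\Rightarrow$(v).} Stone--Weierstrass identifies $M(\beta f)\mu$ and $M(\beta f)\nu$ only after pushing them to the metrizable compactum $\overline{g(\beta X)}$, not on $\beta Y$. You need two further facts, as in the paper. For $q\neq a$, measures in $M_q(Y)\subset M_\tau(Y)$ are determined by that image; the paper phrases this as $M_\tau(Y)$ having a countable network. For $q=a$, $p\neq a$, (iv) forces $X$ to be pseudocompact, so $Y$ is compact.

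\textbf{Failure of (v)$\Rightarrow$(iv).} Your lifting lemma is false. Take $X=[0,1]$ with the discrete topology and $f=\mathrm{id}$. Then $M_\tau(X)$ consists of discrete measures, so Lebesgue measure has no lift for $p\in\{\beta,R,\tau\}$.

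Worse, (v)$\Rightarrow$(iv) itself fails, so no lifting argument can close this step. Let $X$ be countably compact with $\omega\subset X\subset\beta\omega$ and $|X|=\mathfrak{c}$; then $\beta X=\beta\omega$.
\begin{itemize}
\item Infinite closed subsets of $\beta\omega$ have cardinality $2^{\mathfrak{c}}$, so compact subsets of $X$ are finite and $M_R(X)=M_d(X)$.
\item Every separable metrizable image of $X$ is compact, so (v) holds trivially for $(p,q)=(R,a)$.
\item Take a surjection $g\colon\beta\omega\to[0,1]$, a measure $\mu\in M(\beta\omega)$ with $M(g)\mu$ equal to Lebesgue measure, and $h_k=g^k$. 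Any $\nu$ as in (iv) would satisfy $M(g)\nu=M(g)\mu$, which is impossible for a discrete $\nu$.
\end{itemize}
Combined with your $\sin(1/\varphi)$ argument, this $X$ is pseudocompact but not universally $M^\ast_a$-measurable, contrary to Corollary~\ref{c:main:1}. The paper's remark that ``(iv)$\Leftrightarrow$(v) is easily verified'' conceals the same problem. What its argument actually establishes is (iii)$\Rightarrow$(v), together with (v)$\Rightarrow$(iv) when $p\in\{a,d\}$, where lifting is trivial.
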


From this theorem we obtain the following corollaries.

\begin{cor}\label{c:main:1}
If $X$ is pseudocompact, then $X$ is universally $M_a^\ast$-measurable.
\end{cor}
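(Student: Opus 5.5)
Unwinding the definitions, universally $M_a^\ast$-measurable means universally $M^\ast_{[R,a]}$-measurable: $M_R(X)$ is $C^\ast$-embedded in $M_a(X)=M(\bt X)$. By Theorem \ref{t:main:1}, (ii)$\Leftrightarrow$(v) with $p=R$, $q=a$, it suffices to check condition (v). That is, for every separable metric space $Y$ and continuous surjection $f\colon X\to Y$ we must show $M(\bt f)(M(\bt X))\subset M_R(Y)$.

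The key step is that $Y$ is compact. Indeed, $Y=f(X)$ is a continuous image of a pseudocompact space, hence pseudocompact. A pseudocompact metrizable space is compact, since a metric space that is not compact carries an unbounded continuous function. Consequently $\bt Y=Y$, and $\bt f\colon\bt X\to Y$ is just the continuous extension of $f$.

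Now take any $\mu\in M(\bt X)$. Then $M(\bt f)(\mu)$ is an element of $M(\bt Y)=M(Y)$, a Radon measure on the compactum $Y$. Condition $(M_R)$ requires the measure to be concentrated on a $\si$-compact subset of $Y$, and $Y$ itself serves. Hence $M(\bt f)(\mu)\in M_R(Y)$, which verifies (v).

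There is no real obstacle once Theorem \ref{t:main:1} is available. The only point needing care is that $M_R(Y)$ is interpreted inside $M(\bt Y)$ and that $\bt Y=Y$ for compact $Y$, so that $M_R(Y)=M(Y)$. As a side remark, Theorem \ref{t:main:1} also gives the stronger $C$-embedding version (i), not just $C^\ast$-embedding.
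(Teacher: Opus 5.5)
Your route is the one the paper intends. You read the corollary off Theorem~\ref{t:main:1} with $p=R$, $q=a$, and the paper's proof of that theorem makes the same observation: for pseudocompact $X$ every separable metric image $Y$ is a metrizable compactum, so $M_a(Y)=M(Y)=M_R(Y)$. Your verification of (v) is correct, but for pseudocompact $X$ it holds automatically. All the content therefore lies in the implication (v)$\Rightarrow$(iv) of Theorem~\ref{t:main:1}, which the paper calls ``easily verified'' and which is exactly where the argument fails. Given $\mu\in M(\beta X)$ and $(h_n)_n\subset C(\beta X)$, put $f=(h_n)_n|_X\colon X\to Y\subset\mathbb{R}^\omega$. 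For (iv) you need some $\nu\in M_R(X)$ whose image $M(\beta f)(\nu)$ agrees with $M(\beta f)(\mu)$ on the coordinate functions of $Y$. Knowing $M(\beta f)(\mu)\in M_R(Y)$ does not give this, because a Radon measure on $Y$ need not be the image of a Radon measure on $X$. The condition genuinely equivalent to (iv) is $M(\beta f)(M_q(X))\subset M(\beta f)(M_p(X))$ for all such $f$; this is also what the paper's argument for (iii)$\Rightarrow$(v) actually proves. Enlarging the right-hand side to $M_p(Y)$ discards precisely this lifting requirement.

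In fact the statement is false, so the gap cannot be closed. Let $X$ be countably compact with $\omega\subset X\subset\beta\omega$ and $|X|=\mathfrak c$ (adjoin accumulation points of all countable subsets, $\omega_1$ times); then $\beta X=\beta\omega$. Infinite closed subsets of $\beta\omega$ have cardinality $2^{\mathfrak c}$, so compact subsets of $X$ are finite. Hence $M_R(X)$ consists of the discrete measures $\sum_i a_i\delta_{x_i}$ with $x_i\in X$. Let $G\colon\beta X\to[0,1]$ extend a bijection $\omega\to\mathbb{Q}\cap[0,1]$. Then $G$ is onto, so $M(G)(\mu)=\lambda$ (Lebesgue measure) for some $\mu\in M(\beta X)$. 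Put $h_n=G^n$, $n\in\omega$. If $\nu(h_n)=\mu(h_n)$ for all $n$, then $M(G)(\nu)$ has the same moments as $\lambda$, hence equals $\lambda$ by the Weierstrass theorem. But $M(G)$ maps discrete measures to discrete ones, and $\lambda$ is atomless. Thus $\{\nu:\nu(h_n)=\mu(h_n)\ \forall n\}$ is a nonempty $G_\delta$-set in $M(\beta X)$ missing $M_R(X)$, which is dense in $M(\beta X)$. By Theorem~\ref{t:lcs:c-emb}, $M_R(X)$ is not $C^\ast$-embedded in $M(\beta X)$. Explicitly, take $\Phi(\nu)=\sum_n 2^{-n}\min\{1,|\nu(h_n)-\mu(h_n)|\}$; then $\sin(1/\Phi)$ is bounded and continuous on $M_R(X)$ but oscillates in every convex weak$^\ast$ neighbourhood of $\mu$. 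So this countably compact $X$ satisfies (v) yet is not universally $M_a^\ast$-measurable.

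The lifting does succeed under a stronger hypothesis such as pseudo-$\omega$-boundedness. Take a base $(V_m)_m$ of $Y$ and a compact $K\subset X$ meeting every $f^{-1}(V_m)$; then $f(K)=Y$. Every element of $M(Y)$ lifts to a measure supported on $K$, which lies in $M_\beta(X)\subset M_R(X)$.
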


\begin{cor}\label{c:main:2}
Any space is universally $M_{[\tau,\si]}$-measurable.
\end{cor}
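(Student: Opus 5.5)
The plan is to derive Corollary \ref{c:main:2} from Theorem \ref{t:main:1} with $p=\tau$, $q=\si$, by checking condition (v). Let $f\colon X\to Y$ be a continuous surjection onto a separable metric space $Y$, and let $\mu\in M_\si(X)$. We must show that $M(\bt f)(\mu)\in M_\tau(Y)$. By the remark preceding Theorem \ref{t:main:1}, $M(\bt f)(\mu)\in M_\si(Y)$ in any case. So it suffices to show that $M_\si(Y)=M_\tau(Y)$ for every separable metrizable $Y$, i.e.\ that such a $Y$ is universally $[\si,\tau]$-measurable.

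This last fact follows from Remark \ref{n:1}(iii): a separable metrizable space is Lindelöf, hence universally $[\si,\tau]$-measurable by \cite[Corollary 2.2]{BanakhChigogidzeFedorchuk2003}. If a self-contained argument is preferred, it can be checked directly from the definitions $(M_\si)$ and $(M_\tau)$. Let $\nu\in M_\si(Y)$ and let $K\subset\bt Y\setminus Y$ be compact. For each $y\in Y$ choose a cozero neighbourhood of $y$ in $\bt Y$ whose closure misses $K$. By Lindelöfness, countably many such sets cover $Y$. Their union $U$ is a cozero set in $\bt Y$ containing $Y$ and disjoint from $K$. Then $\bt Y\setminus U$ is a compact $G_\de$ subset of $\bt Y\setminus Y$ containing $K$. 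Hence $|\nu|(K)\le|\nu|(\bt Y\setminus U)=0$, so $\nu\in M_\tau(Y)$.

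Given this, (v) holds with $q=\si$ and $p=\tau$, so Theorem \ref{t:main:1} yields (i): $M_\tau(X)$ is $C$-embedded in $M_\si(X)$ for every $X$. There is no real obstacle here. The only point needing care is that the whole difficulty has been moved into the implication (v)$\Rightarrow$(i) of Theorem \ref{t:main:1}, which we may assume.
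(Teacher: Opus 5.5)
\paragraph{Verdict.} Your reduction is the derivation the paper intends: Theorem~\ref{t:main:1} with $p=\tau$, $q=\si$, with (v) verified through the Lindel\"of property. Your direct check that $M_\si(Y)=M_\tau(Y)$ for Lindel\"of $Y$ is correct. The gap is the step you set aside: the implication (v)$\Rightarrow$(i) of Theorem~\ref{t:main:1} is not valid, and in fact the corollary itself is false.

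\paragraph{Where (v)$\Rightarrow$(i) breaks.} In the paper this implication goes through (v)$\Rightarrow$(iv), which is only called ``easily verified''. Take $\mu\in M_\si(X)$ and $(h_n)_n$, and put $f=(h_n|_X)_n\colon X\to Y=f(X)\subset\R^\om$.
\begin{itemize}
\item Condition (v) only gives $M(\bt f)(\mu)\in M_\tau(Y)$.
\item Condition (iv) needs some $\nu\in M_\tau(X)$ whose image $M(\bt f)(\nu)$ agrees with $M(\bt f)(\mu)$ on the coordinate functions. That is a lifting of $\tau$-additive measures along $f$, and nothing supplies one.
\end{itemize}
Your own argument shows that (v) holds for $(p,q)=(\tau,\si)$ for \emph{every} $X$. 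So the whole content of the corollary is this lifting property, and it fails.

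\paragraph{Counterexample.}
\begin{itemize}
\item \emph{The space.} Let $T=2^{<\om}$. For $s\in T$ let $g_0(s)\in 2^\om$ be $s$ followed by zeros. For $x\in 2^\om$ let $B_x=\{x|n:n\in\om\}$. By Zorn's lemma take an almost disjoint family $\mathcal A$ of infinite subsets of $T$ that contains all $B_x$, along each member of which $g_0$ converges, and that is maximal with these properties. Every infinite $B\subset T$ has an infinite subset along which $g_0$ converges, so $\mathcal A$ is MAD.
\item \emph{The map.} $X=\Psi(\mathcal A)$ is locally compact and pseudocompact. Setting $g(A)=\lim_{s\in A}g_0(s)$ gives a continuous surjection $g\colon X\to 2^\om$, since $g(B_x)=x$.
\item \emph{The two measure spaces.} $X$ is $G_\de$-dense in $\bt X$, so $(M_\si)$ is vacuous and $M_\si(X)=M(\bt X)$. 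Since $\bt X\setminus X$ is compact, $(M_\tau)$ forces every $\nu\in M_\tau(X)$ to be concentrated on $X$. Every compact subset of $X$ meets the closed discrete set $\mathcal A$ in a finite set, so it is countable. Hence $M_\tau(X)=M_d(X)$, and this is weak$^*$ dense in $M(\bt X)$.
\item \emph{No matching discrete measure.} Choose $\mu\in M(\bt X)$ with $M(\bt g)(\mu)=\lambda$, the Haar measure on $2^\om$; this is possible because $M(\bt g)$ maps $P(\bt X)$ onto $P(2^\om)$. Let $(h_n)_n$ enumerate the functions $\chi_{[s]}\circ\bt g$, $s\in T$. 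If $\nu(h_n)=\mu(h_n)$ for all $n$, then $M(\bt g)(\nu)=\lambda$, because the $\chi_{[s]}$ span a dense subspace of $C(2^\om)$. This is impossible for discrete $\nu$, since $\lambda$ is atomless. So $M_\tau(X)$ misses a nonempty $G_\de$-subset of $M_\si(X)$.
\item \emph{Not $C^\ast$-embedded.} Let $u(\nu)=\sum_n2^{-n}\min(1,|\nu(h_n)-\mu(h_n)|)$. Then $\sin(1/u)$ is bounded and continuous on $M_\tau(X)$. For every convex neighbourhood $W$ of $\mu$, $u$ takes all values in some interval $(0,\ep)$ on the connected set $W\cap M_\tau(X)$. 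Hence $\sin(1/u)$ has no continuous extension at $\mu$, and $M_\tau(X)$ is not even $C^\ast$-embedded in $M_\si(X)$.
\end{itemize}
The same space also refutes Corollary~\ref{c:main:1}, since there $M_R(X)=M_d(X)$ and $M_a(X)=M(\bt X)$.
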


\subsection{Probabilistic, Non-Negative, and Unit-Bounded Measures}
\label{ssec:pmpu}

There exists a pseudocompact locally compact space $X$ that is neither universally $P_\si^\ast$-measurable nor $P_{[\tau,\si]}$-measurable \cite[Example 1.15]{Reznichenko2025betap}, which answers negatively the questions in \cite{BanakhChigogidzeFedorchuk2003,Bogachev2024-1ru}. As Corollaries \ref{c:main:1} and \ref{c:main:2} show, the answers to these questions are affirmative if we consider alternating measures rather than probability measures.

The following assertion follows directly from the definitions.

\begin{proposition}\label{p:pmpu:1}
Let $X$ be a space, $p,q\in\{a,\si,\tau,R,\bt,d\}$, and $Q\in \{M,M^+,U,U^+,P\}$.
If $X$ is a universally $Q_{[p,q]}$-measurable space, then $X$ is a universally $Q_{[p,q]}^\ast$-measurable space.
\end{proposition}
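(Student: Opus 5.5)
The statement is Proposition~\ref{p:pmpu:1}, and the plan is to unwind the definitions: being $C$-embedded implies being $C^\ast$-embedded. Fix $X$, $p,q$, and $Q$, and assume $X$ is universally $Q_{[p,q]}$-measurable. By definition this means $Q_p(X)$ is $C$-embedded in $Q_q(X)$: every continuous $g\colon Q_p(X)\to\R$ extends to a continuous $\tilde g\colon Q_q(X)\to\R$. We must show that every bounded continuous $g$ on $Q_p(X)$ extends to a continuous function on $Q_q(X)$.

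The argument has one step. Take a bounded continuous $g\colon Q_p(X)\to\R$. It is in particular continuous, so by the hypothesis it has a continuous extension $\tilde g$ to $Q_q(X)$. This $\tilde g$ already witnesses $C^\ast$-embedding, because the definition recalled earlier in the paper only requires \emph{some} continuous extension of a bounded function and says nothing about the extension being bounded.

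If one prefers the convention that the extension of a bounded function must itself be bounded, a single extra step handles it. Let $|g|\le c$ and set $r(t)=\max(-c,\min(c,t))$. Then $r\circ\tilde g$ is continuous and bounded on $Q_q(X)$. It still extends $g$, since $r$ is the identity on $[-c,c]$, which contains the range of $g$.

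No step is a real obstacle; the only point requiring care is this harmless clamping, depending on which convention for $C^\ast$-embedding is used. Note also that $Q_p(X)\subset Q_q(X)$ is implicit in both definitions, so no separate argument is needed for it.
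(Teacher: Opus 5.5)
Your argument is correct and is the same as the paper's. The paper states that the proposition follows directly from the definitions, since $C$-embedding implies $C^\ast$-embedding, and your clamping step with $r(t)=\max(-c,\min(c,t))$ also covers the convention that requires a bounded extension.
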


\begin{theorem}\label{t:pmpu:2}
Let $X$ be a space, $p,q\in\{a,\si,\tau,R,\bt,d\}$, and $Q\in \{M^+,U,P\}$.
If $X$ is a universally $Q_{[p,q]}^\ast$-measurable space, then $X$ is a universally $M_{[p,q]}$-measurable space.
\end{theorem}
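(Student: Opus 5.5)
The plan is to reduce to condition (iv) of Theorem~\ref{t:main:1}. Fix $\mu\in M_q(X)$ and a sequence $(h_n)_n\subset C(\bt X)$; we must find $\nu\in M_p(X)$ with $\nu(h_n)=\mu(h_n)$ for all $n$. We will show the stronger statement that $\nu$ can be found in $Q_p(X)$ whenever $\mu\in Q_q(X)$, and then reduce arbitrary $\mu$ to this case.

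\emph{Reduction from $M_q(X)$ to $Q_q(X)$.} If $\mu\in M_q(X)$ is arbitrary, write $\mu=\mu^+-\mu^-$ (Jordan decomposition in $M(\bt X)$). The conditions $(M_k)$ are all stated in terms of $|\mu|$ and are inherited by $\mu^\pm$ and by positive multiples, so after scaling, $\mu^\pm/c$ lies in $Q_q(X)$ for each $Q\in\{M^+,U,P\}$ (for $P$, normalize each nonzero part separately). Solving the problem for each part with the same sequence $(h_n)$ and recombining linearly gives $\nu\in M_p(X)$. So it suffices to treat $\mu\in Q_q(X)$.

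\emph{Setting up the affine map.} Replacing $h_n$ by $h_n/(2^n(\|h_n\|+1))$, define the affine continuous map
\[
\Phi\colon Q_q(X)\to \ell^2,\qquad \Phi(\lambda)=(\lambda(h_n))_n .
\]
Its image lies in a compact subset of $\ell^2$, since $Q_q(X)\subset U$-bounded sets up to a constant. Put $y=\Phi(\mu)$ and $C=\Phi(Q_p(X))$. The set $C$ is convex because $Q_p(X)$ is convex. Suppose, for contradiction, that $y\notin C$.

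\emph{Density of $Q_p(X)$ in $Q_q(X)$.} Measures with finite support in $X$, and with the appropriate positivity and normalization, are weak${}^\ast$ dense in $Q_a(X)=Q(\bt X)$, because $X$ is dense in $\bt X$. These measures lie in $Q_d(X)\subset Q_p(X)$. Hence there are $\nu_k\in Q_p(X)$ with $z_k=\Phi(\nu_k)\to y$.

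\emph{The key step: an oscillating function.} Define $g(\lambda)=\sin\bigl(1/\|\Phi(\lambda)-y\|\bigr)$ on $Q_p(X)$. This function is continuous and bounded because $y\notin C$. By the $C^\ast$-embedding, $g$ extends to a continuous $\tilde g$ on $Q_q(X)$, so $\tilde g$ must have a limit at $\mu$ along $Q_p(X)$.

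We now show this is impossible, using convexity. The segments $[\nu_1,\nu_k]$ lie in $Q_p(X)$. Along such a segment, $\|\Phi(\cdot)-y\|$ varies continuously between $\|z_1-y\|$ and $\|z_k-y\|\to 0$, so every small positive distance $r$ is attained. We need these points to be close to $\mu$ in the weak${}^\ast$ topology, not merely to have $\Phi$-image close to $y$. To arrange this, given a basic neighbourhood $W$ of $\mu$, choose $\nu'\in W\cap Q_p(X)$ with $\Phi(\nu')$ very close to $y$, and use the segment from $\nu'$ to a further $\nu_k\in W$. Since $W$ may be taken convex (the topology is locally convex), the whole segment stays in $W$ and still realizes all distances in $(0,\|\Phi(\nu')-y\|]$. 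Thus $g$ takes both values $\pm1$ in every neighbourhood of $\mu$ within $Q_p(X)$. This contradicts continuity of $\tilde g$ at $\mu$. Hence $y\in C$, i.e.\ some $\nu\in Q_p(X)$ satisfies $\nu(h_n)=\mu(h_n)$ for all $n$.

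\emph{Conclusion.} Condition (iv) holds, and Theorem~\ref{t:main:1} (iv)$\Rightarrow$(i) gives that $X$ is universally $M_{[p,q]}$-measurable.

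I expect the main obstacle to be the local-convexity argument in the key step: the intermediate distances must be realized inside an arbitrary weak${}^\ast$ neighbourhood of $\mu$, with the segment staying in $Q_p(X)$. The normalization constraint for $P$ also needs care in the reduction step.
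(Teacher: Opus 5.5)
The paper gives no proof of Theorem~\ref{t:pmpu:2}. Section~\ref{sec:proofs} only sketches Theorem~\ref{t:main:1} and comments on Theorem~\ref{t:pmpu:3}, so there is no argument of the paper's to match. Your proof is correct up to small repairs. It fills the gap along the route the paper's framework suggests, namely through condition (iv) of Theorem~\ref{t:main:1}.

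What is genuinely needed, and what you supply, is a substitute for Theorem~\ref{t:lcs:c-emb}. The paper's way of turning $C^\ast$-embedding into $G_\delta$-density (perfect $\kappa$-normality of weak topologies) applies only to dense subsets of a linear space, whereas $P_q(X)$, $U_q(X)$ and $M^+_q(X)$ are merely convex. Your function $\sin(1/\|\Phi(\cdot)-y\|)$ uses convexity in place of linearity. For a convex basic neighbourhood $W$ of $\mu$ in $Q_q(X)$, the set $\Phi(W\cap Q_p(X))$ is convex, misses $y$, and has $y$ in its closure. The last fact follows from continuity of $\Phi$ at $\mu$ and density of finitely supported measures. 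Hence its distances to $y$ fill an interval with infimum $0$, and the function takes both values $\pm1$ in every neighbourhood of $\mu$. This proves slightly more than required: $Q_p(X)$ is $G_\delta$-dense in $Q_q(X)$, since the interpolating $\nu$ can be taken in $Q_p(X)$. The Jordan decomposition then passes this to $M$, and only at that point is the linear-space machinery of Theorem~\ref{t:main:1} invoked.

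Three points need tidying.

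First, $Q_d(X)\subset Q_p(X)$ fails for $p=\beta$: the measure $\sum_n 2^{-n}\delta_n$ on $X=\N$ is not concentrated on a compact set. You only need finitely supported measures on $X$, and these lie in every $Q_p(X)$.

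Second, for $Q=M^+$ the set $Q_q(X)$ is unbounded, so $\Phi(Q_q(X))$ is not relatively compact in $\ell^2$. Compactness is never used. Continuity of $\Phi$ into $\ell^2$ does need a word: it follows from local boundedness, because $\lambda\mapsto\lambda(1)=\|\lambda\|$ is weak$^\ast$ continuous on $M^+(\beta X)$. Alternatively, replace $\ell^2$ by a compatible metric on $\R^\om$.

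Third, the sequence $(\nu_k)$ was chosen only so that $\Phi(\nu_k)\to y$, so it need not enter a given $W$. Both endpoints of your segment must be chosen in $W\cap Q_p(X)$, again by continuity of $\Phi$ at $\mu$ and density. The segment then realizes only the distances between its two endpoint values, not all of $(0,\|\Phi(\nu')-y\|]$. That suffices once the reciprocal distances span an interval longer than $2\pi$. The convexity formulation above avoids segments altogether.
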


\begin{theorem}\label{t:pmpu:3}
Let $X$ be a space and $p,q\in\{a,\si,\tau,R,\bt,d\}$.
\begin{enumerate}
\item[{\rm (i)}]
A space $X$ is a universally $P_{[p,q]}$-measurable space
if and only if $X$ is a universally $M^+_{[p,q]}$-measurable space.
\item[{\rm (ii)}]
A space $X$ is a universally $P^\ast_{[p,q]}$-measurable space
if and only if $X$ is a universally $M^{+^\ast}_{[p,q]}$-measurable space.
\end{enumerate}
\end{theorem}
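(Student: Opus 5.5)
We prove both parts by comparing $P_p(X)$ with the cone $M^+_p(X)$ via the mass map and normalization. Put $m\colon M^+(\bt X)\to[0,\infty)$, $m(\mu)=\mu(\bt X)=\mu(1)$. This map is continuous in the weak$^\ast$ topology, since the constant $1$ lies in $C(\bt X)$. Each class $M^+_k(X)$, $k\in\{a,\si,\tau,R,\bt,d\}$, is closed under multiplication by nonnegative scalars, because every condition $(M_k)$ is scale invariant. Hence
\[
M^+_k(X)\setminus\{0\}\cong P_k(X)\times(0,\infty),\qquad \mu\mapsto \bigl(\mu/m(\mu),\,m(\mu)\bigr),
\]
and this is a homeomorphism compatible with the inclusion $M^+_p(X)\subset M^+_q(X)$. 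Moreover $P_k(X)=m^{-1}(1)\cap M^+_k(X)$ is a closed subset of $M^+_k(X)$, and it is a retract of $M^+_k(X)\setminus\{0\}$.

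\emph{From $M^+$ to $P$.} Suppose $M^+_p(X)$ is $C$-embedded (resp.\ $C^\ast$-embedded) in $M^+_q(X)$, and let $g\colon P_p(X)\to\R$ be continuous (resp.\ bounded continuous). Define $G(\mu)=m(\mu)\,g(\mu/m(\mu))$ for $\mu\neq0$ and $G(0)=0$. In the $C^\ast$ case with $|g|\le c$ we get $|G(\mu)|\le c\,m(\mu)$, so $G$ is continuous at $0$; it is continuous elsewhere by the homeomorphism above. Then $G$ is continuous on $M^+_p(X)$. In the $C^\ast$ case it is not bounded, so we use $\min(\max(G,-c),c)$ instead, which still agrees with $g$ on $P_p(X)$. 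Extend this function to $\tilde G$ on $M^+_q(X)$ and restrict $\tilde G$ to $P_q(X)$. The result is an extension of $g$, bounded when required.

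In the unbounded $C$ case, $G$ may fail to be continuous at $0$, and this is the main obstacle. To avoid it, apply the argument to the open subspaces $M^+_k(X)\setminus\{0\}$: define $G(\mu)=g(\mu/m(\mu))$ there. We then need that $M^+_p(X)\setminus\{0\}$ is $C$-embedded in $M^+_q(X)\setminus\{0\}$. This is exactly the product statement that $P_p\times(0,\infty)$ is $C$-embedded in $P_q\times(0,\infty)$. It follows from $C$-embedding of $M^+_p$ in $M^+_q$ by restricting to the closed slice $m=1$, and closed slices of an open set suffice here because we only need $P$. Concretely, take $g$ on $P_p$ and the function $(\nu,t)\mapsto g(\nu)$ on $P_p\times[1/2,2]$, which is a closed piece away from $0$. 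Extend it to $M^+_p$ by $\phi(m(\mu))\,g(\mu/m(\mu))$, where $\phi$ is a Urysohn function of $m$ equal to $1$ at $1$ and vanishing near $0$. This function is continuous and vanishes near $0$, hence is continuous on $M^+_p(X)$. Extend it to $M^+_q(X)$ and restrict the extension to $P_q(X)$. This handles the $C$ case uniformly, and the $C^\ast$ case in the same way.

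\emph{From $P$ to $M^+$.} Suppose $P_p(X)$ is $C$-embedded (resp.\ $C^\ast$-embedded) in $P_q(X)$, and let $f$ on $M^+_p(X)$ be continuous. Via the homeomorphism, $f$ restricted to $M^+_p\setminus\{0\}$ becomes a continuous function on $P_p(X)\times(0,\infty)$. A $C$-embedded subspace $A\subset B$ gives $A\times[a,b]$ $C$-embedded in $B\times[a,b]$ when the interval is compact. One proves this by Stone--Weierstrass approximation or by the Tietze-type argument for products with a compact metric factor. The natural route is to expand $f$ along a sequence of polynomials in $t$, using uniform approximation on $P_p\times[a,b]$ and extending the coefficients. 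Passing from compact intervals to $(0,\infty)$ is done by a locally finite partition of unity in $t$.

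The remaining point is continuity of the extension at $0$. In the $C^\ast$ case, set $F(\mu)=f(0)+\psi(\mu)$. To build $\psi$, first extend $f-f(0)$ from $M^+_p\cap\{m\le 1\}$, where it is small near $0$; by continuity of $f$ at $0$ we have $\sup_{m(\mu)\le\varepsilon}|f(\mu)-f(0)|\to0$ only when restricted appropriately. Extending on the shells $\{2^{-n-1}\le m\le2^{-n}\}$ with the same sup bounds, and truncating, gives a function continuous at $0$. I expect this continuity at the zero measure to be the genuine difficulty, since $P$ carries no information near $0$. If this sup control fails, I would fall back on deriving (ii) from Theorem~\ref{t:pmpu:2} and Theorem~\ref{t:main:1}, combined with the analogous $G_\de$-density characterization for $M^+$.
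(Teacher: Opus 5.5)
Your direction from $M^+$ to $P$ is correct for both (i) and (ii): take the cut-off $\phi(m(\mu))\,g(\mu/m(\mu))$, extend it, and restrict to $\{m=1\}$. The paper gives no argument for this theorem beyond restating the equivalence as a ``fact'', so the converse is entirely on you, and that is where the proposal fails. You justify the product lemma ``$A$ $C$-embedded in $B$ $\Rightarrow$ $A\times[a,b]$ $C$-embedded in $B\times[a,b]$'' by Stone--Weierstrass, but $P_p(X)\times[a,b]$ is not compact. A continuous $f$ admits no uniform approximation by $\sum_i c_i(\nu)t^i$ unless $\{f(\nu,\cdot)\}_\nu$ is equicontinuous. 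Extending the coefficients one at a time also gives no control of the extended series off $P_p(X)$. Your shell construction for (ii) needs the $C^\ast$ form of the lemma, and that form is false: $\mathbb N$ is $C^\ast$-embedded in $\bt\mathbb N$, but $(n,t)\mapsto\sin(nt)$ has no continuous extension to $\bt\mathbb N\times[0,1]$, since an extension would make $\{\sin(n\,\cdot)\}_n$ equicontinuous. The fallback does not help either. Theorems~\ref{t:pmpu:2} and~\ref{t:main:1} only give $G_\de$-density of $M_p(X)$ in $M_q(X)$, which carries no positivity information. By Corollary~\ref{c:main:2} every space is universally $M_{[\tau,\si]}$-measurable, yet for the space of \cite[Example 1.15]{Reznichenko2025betap} $P_\tau$ is not $C$-embedded in $P_\si$, so by your valid direction $M^+_\tau$ is not $C$-embedded in $M^+_\si$. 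Moreover, Theorem~\ref{t:lcs:c-emb} concerns dense subsets of a whole weak linear space, not of a cone.

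For (i) the gap can be closed, but by a different argument. Finitely supported measures on $X$ lie in every $P_k(X)$, so $P_p(X)$ is dense in $P_q(X)$. A continuous $f$ on $P_p(X)\times(0,\infty)$ is the same as a continuous map $\hat f$ into $C_k((0,\infty))$. That space is Polish, hence a closed subset of $\R^\om$. Extend the countably many coordinates over $P_q(X)$, and use density to land back in that closed copy. The vertex, which you flag as the main difficulty, is harmless. Since $|\mu(h)|\le\|h\|\,m(\mu)$ for $\mu\ge0$, the sets $\{m<\ep\}$ form a neighbourhood base at $0$ in $M^+(\bt X)$, and by density, setting $F(0)=f(0)$ gives a continuous extension.

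For (ii) this route breaks down, because a bounded $f$ only gives $\hat f$ into a bounded, not relatively compact, subset of $C[a,b]$. The missing ingredient is measure-specific, not general topology. Take $q=a$: your own cut-off argument shows that $C^\ast$-embedding of $M^+_p(X)$ in $M^+(\bt X)$ makes $P_p(X)\times[1,2]$ $C^\ast$-embedded in the compact $P(\bt X)\times[1,2]$. By Glicksberg's theorem, $P_p(X)$ must then be pseudocompact. So the implication from universal $P^\ast_{[p,a]}$- to universal $M^{+\ast}_{[p,a]}$-measurability contains the statement ``$P_p(X)$ $C^\ast$-embedded in $P(\bt X)$ implies $P_p(X)$ pseudocompact''. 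This fails for general dense $C^\ast$-embedded subspaces, and for $p=R$ it is exactly Bogachev's theorem \cite{Bogachev2024-1ru,Bogachev2024dan-ru}.
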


\begin{problem}
Let $p,q\in\{f,\si,\tau,R,\bt,d\}$.
How are the following classes of spaces related?
\begin{itemize}
\item
universally $P_{[p,q]}$-measurable spaces;
\item
universally $P^\ast_{[p,q]}$-measurable spaces;
\item
universally $U_{[p,q]}$-measurable spaces;
\item
universally $U^\ast_{[p,q]}$-measurable spaces;
\item
universally $M_{[p,q]}$-measurable spaces.
\end{itemize}
\end{problem}


\section{Preliminary Information, Definitions, and Notation}\label{sec:defs}

Let $\om=\sset 0 \cup \N$ be the first countable ordinal, and $\om_1$ be the first uncountable ordinal. By space, we mean a Tychonoff space.

\subsubsection{Definitions from Measure Theory}
Let $\mu$ be a finite (i.e., $\mu(X)<\infty$) $\si$-additive measure on the Borel $\si$-algebra $\B(X)$ of $X$.
Let $\mu=\mu^+-\mu^-$ be the Hahn--Jordan decomposition. The measure $|\mu|=\mu^++\mu^-$ is called the {total variation} of $\mu$.
A measure $\mu$ is called \term{Radon} if for every Borel set $B\subset X$ and for every $\ep>0$, there exists a compact $K\subset B$ such that $|\mu|(B\setminus K)<\ep$.
\term{The support} $\supp\mu$ of a measure $\mu$ is the smallest closed set $F\subset X$ for which $|\mu|(X\setminus F)=0$. A measure $\mu$ is called \term{discrete} if $|\mu|(C)=1$ for some countable $C\subset X$. A measure $\mu$ is called \term{continuous} if $\mu(\sset x)=0$ for every $x\in X$.
A measure $\mu$ is called \term{$\tau$-additive} if
\[
|\mu|(\bigcup\cU) = \sup \set{ |\mu|(\bigcup\cV) : \cV\subset \cU,\ |\cV|<\om }
\]
for any family $\cU$ of open subsets of $X$.

For a non-negative measure $\mu$, we denote
\begin{align*}
\mu^*(A) &= \inf\set{\mu(B): A\subset B\in \cB(X)},
&
\mu_*(A) &= \sup\set{\mu(B): A\supset B\in \cB(X)}
\end{align*}
\term{external} and \term{internal} measures of a set $A\subset X$.

A countably additive measure $\mu$ on the $\si$-algebra $\cB(X)$ of Borel sets in $X$ is called a \term{Borel} measure.
We denote the set of probability Borel measures on $X$ by $P_b(X)$.
Let $\mu\in P_b(X)$.
A probability measure $\mu$ is called a \term{Radon} measure if, for every Borel set $B\subset X$ and for every $\ep>0$, there exists a compact $K\subset B$ such that $\mu(B\setminus K)<\ep$.
We denote the set of probability $\tau$-additive measures on $X$ by $P_\tau(X)$. Note that $P(X)\subset P_\tau(X)$.

Let $Y$ be a space and $X\subset Y$. The set $P_b(X)$ is naturally embedded in $P_b(Y)$, and a measure $\mu\in P_b(X)$ corresponds to a measure $\tilde\mu\in P_b(Y)$ such that $\tilde\mu(B)=\mu(B\cap Y)$ for $B\in\cB(Y)$. We assume that $P_b(X)\subset P_b(Y)$. If $Y$ is compact, then $P(X)\subset P_\tau(X)\subset P(Y)$.

\begin{proposition}[\cite{Banakh1995ru}]\label{p:defs:2}
Let $Y$ be a compact space and $X\subset Y$.
Then $w(X)=w(P(X))=w(P_\tau(X))$ and
\begin{align*}
P(X) &= \set{\mu \in P(Y): \mu_*(X)=1},
&
P_\tau(X) &= \set{\mu \in P(Y): \mu^*(X)=1}.
\end{align*}
The embedding of $P(X)$ and $P_\tau(X)$ into $P(Y)$ is a topological embedding.
\end{proposition}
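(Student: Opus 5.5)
Since the proposition is cited from \cite{Banakh1995ru}, I will prove the three assertions directly from the definitions, using the Riesz identification of $P(Y)$ with Radon probability measures on $Y$. Each $\mu\in P_b(X)$ is identified with $\tilde\mu(B)=\mu(B\cap X)$ on $Y$. The plan is to prove the set descriptions first, then the topological embedding, and the weight equality last.

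\emph{Set descriptions.} Let $\mu\in P(X)$ be Radon on $X$. Then $\mu$ is concentrated on a $\si$-compact set $\bigcup_n K_n\subset X$, and each $K_n$ is compact in $Y$, so $\tilde\mu_*(X)=1$. Conversely, suppose $\nu\in P(Y)$ satisfies $\nu_*(X)=1$. By inner regularity of $\nu$ there are compacts $K_n\subset X$ with $\nu(K_n)\to1$. Restricting $\nu$ to $\bigcup K_n$ gives a Radon measure on $X$ whose image is $\nu$.

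For $\tau$-additive measures, let $\mu\in P_\tau(X)$. The trace $\tilde\mu$ is $\tau$-additive on $Y$ and hence Radon, since $Y$ is compact. If $B\supset X$ is Borel in $Y$, then $\tilde\mu(B)=\mu(X)=1$, so $\tilde\mu^*(X)=1$. Conversely, for $\nu\in P(Y)$ with $\nu^*(X)=1$, I define $\mu(B\cap Y')=\nu^*(B\cap X)$ for Borel $B\subset Y$. This is the standard trace measure on a set of full outer measure, and it is countably additive on $\cB(X)=\{B\cap X\}$. To get $\tau$-additivity, take an open cover $\cU$ of $X$, write each member as $V\cap X$ with $V$ open in $Y$, and apply $\tau$-additivity of $\nu$ to the corresponding family of sets $V$. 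Outer measure commutes with this because $\nu^*(V\cap X)=\nu(V)$.

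\emph{Topological embedding.} Both $P(X)$ and $P_\tau(X)$ carry the weak topology generated by bounded continuous functions on $X$, while $P(Y)$ carries the topology generated by $C(Y)$. Restrictions of functions in $C(Y)$ are continuous on $X$, so the embedding map is continuous. The reverse direction is the main obstacle: functions in $C_b(X)$ need not extend to $Y$. To handle it I would use the characterization of the weak topology by open sets, namely that $\mu\mapsto\mu(U)$ is lower semicontinuous. Every open $U\subset X$ has the form $V\cap X$ with $V$ open in $Y$, and $\mu(U)=\tilde\mu(V)$ in both cases above; for $P(X)$, inner approximation by compacts gives this. Since the subbasic sets $\{\mu:\mu(U)>r\}$ therefore coincide, the two topologies agree.

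\emph{Weights.} Because $X$ embeds in $P(X)\subset P_\tau(X)$ via Dirac measures, $w(X)\le w(P(X))\le w(P_\tau(X))$. For the reverse inequality, I take a base $\cB$ of $X$ closed under finite unions with $|\cB|=w(X)$. By $\tau$-additivity, every open $U$ satisfies $\mu(U)=\sup\{\mu(B):B\in\cB, B\subset U\}$. Hence the sets $\{\mu:\mu(B)>r\}$, with $B\in\cB$ and rational $r$, form a subbase for the topology of $P_\tau(X)$. This gives weight at most $w(X)\cdot\om$. If $X$ is finite the claim is trivial, so this settles the equality.
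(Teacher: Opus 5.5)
The paper gives no proof of this proposition; it is quoted from Banakh's 1995 paper. Your argument is therefore a self-contained replacement for that citation rather than a variant of something in the text, and it is correct. The citation buys brevity. Your version shows that only standard facts about $\tau$-additive measures are needed.

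The organizing idea is the open-set (Alexandrov--Topsøe) description of the weak topology on $\tau$-additive probability measures: the sets $\{\mu:\mu(U)>r\}$ with $U$ open form a subbase. This one tool does double duty.
\begin{itemize}
\item Every open $U\subset X$ is $V\cap X$ with $V$ open in $Y$, and $\mu(V\cap X)=\tilde\mu(V)$ by definition. So the subbases of $P_\tau(X)$ and of its image in $P(Y)$ match exactly. This neatly sidesteps the fact that functions in $C_b(X)$ need not extend to $Y$.
\item Combined with $\tau$-additivity and a base closed under finite unions, it gives $w(P_\tau(X))\le w(X)\cdot\om$ directly.
\end{itemize}
When you write it up, state both halves of that characterization, since ``$\mu\mapsto\mu(U)$ is lower semicontinuous'' alone gives only one inclusion of topologies.
\begin{itemize}
\item Lower semicontinuity of $\mu\mapsto\mu(U)$ uses $\tau$-additivity, through the directed family of cozero subsets of $U$.
\item Continuity of $\mu\mapsto\mu(f)$, for $f\in C_b(X)$, in the open-set topology is the layer-cake estimate applied to $f$ and to $-f$.
\end{itemize}

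A few small repairs are needed.
\begin{itemize}
\item For $\mu\in P(X)$, note that $\tilde\mu$ is Radon on $Y$, since compacts in $X$ are compact in $Y$.
\item ``$\mu(B\cap Y')$'' should read $\mu(B\cap X)=\nu(B)$. This is well defined because $B_1\cap X=B_2\cap X$ forces $\nu(B_1\triangle B_2)=0$ when $\nu^*(X)=1$.
\item $\tau$-additivity must be checked for arbitrary families of open subsets of $X$, not just covers of $X$. Your argument works verbatim.
\item The appeal to inner approximation for $P(X)$ in the embedding step is unnecessary, since $\mu(V\cap X)=\tilde\mu(V)$ holds by definition.
\item For finite $X$, the weight equality holds only under the convention $w\ge\aleph_0$. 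For a two-point $X$ one has $P(X)\cong[0,1]$, so ``trivial'' must be read in that sense.
\end{itemize}
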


Let $X$ be a space.
Then
\begin{align*}
M_k(X) &= \set{t\mu- q\nu: \mu,\nu\in P_k(X),\ 0\leq t,q},
\\
M_k(X) &= \set{t\mu: \mu\in P_k(X),\ 0\leq t},
\\
U_k(X) &= \set{t\mu+ q\nu: \mu,\nu\in P_k(X),\ 0\leq t,q\leq 1}.
\end{align*}
for $k\in\sset{d,\be,R,\tau,\si,a}$.
The space $U_k(X)$ is the unit ball in $M_k(X)$.
The spaces $U_k(X)$ and $M_k(X)$ for $k\in\sset{\be,R,\tau}$ were studied in the works \cite{Koumoullis1981,Sadovnichii1999,SadovnichiiFedorchuk1999,Sadovnichii2000,Sadovnichii2007,SadovnichiiFedorchuk2008,Sadovnichii2020}.

\subsubsection{Definitions from Topology}
If $X$ is a space and $M\subset X$, then we denote by $\clx XM$ the closure of $M$ in $X$. If the context makes it clear which $X$ is meant, we write $\cl M$ instead of $\clx XM$.

A space $X$ is said to have a countable Souslin number if every disjoint family of open sets in $X$ is at most countable.

A space $X$ is called \term{$\om$-bounded} if $\cl M$ is compact for every countable $M\subset X$.
A space $X$ is called \term{totally countably compact} if for every infinite $L\subset X$, there exists a countable $M\subset L$ such that $\cl M$ is compact \see\cite{Vaughan1984handbook}.

A space $X$ is called \term{pseudo-$\om$-bounded} if for every sequence $(U_n)_n$ of open nonempty open sets, there exists a compact subset $K\subset X$ such that $U_n\cap K\neq\es$ for all $n$ \see\cite{AngoaOrtiz-CastilloTamariz-Mascarua2013,AngoaOrtiz-CastilloTamariz-Mascarua2014,Garcia-FerreiraOrtiz-Castillo2018}.

A space $X$ is called \term{almost pseudo $\om$-bounded} (almost pseu\-do-$\om$-bo\-un\-ded) if, for any sequence $(U_n)_n$ of open nonempty open sets, there exists a subsequence $(n_k)_k$ and a compact subset $K\subset X$ such that $U_{n_k}\cap K\neq\es$ for all $k$ \cite[Definition 3.8]{AngoaOrtiz-CastilloTamariz-Mascarua2014}, \cite[Definition 1.4.5]{AACCICTM2018}.

A subset $G\subset X$ of $X$ is called a \term{$G_\de$-set} if it is the intersection of countably many open sets.

A subset $S\subset X$ of $X$ is called \term{$G_\de$-dense} in $X$ if $S$ intersects every nonempty $G_\de$-set. A space $X$ is pseudocompact if and only if $X$ is $G_\de$-dense in its Stone--Cech extension $\bt X$.

A space $X$ is called \term{realcompact} if $X$ embeds closedly in the product of lines $\R^A$. A \term{Hewitt realcompactification} $\nu X$ of $X$ is a realcompact extension of $X$ such that $X$ is dense in $\nu X$ and $X$ is $C$-embedded in $\nu X$ \seee\cite[section 3.11]{EngelkingBookRu}.
The Hewitt extension $\nu X$ is naturally embedded in the Stone–Cech extension $\bt X$: $p\in \nu X\subset \bt X$ if and only if every $G_\de$-set $G\subset \bt X$ containing $p$ intersects $X$. The space $X$ is pseudocompact if and only if $\nu X=\bt X$. The space $X$ is realcompact if and only if $\nu X=X$ \seee\cite[section 3.11]{EngelkingBookRu}.

A subset $F\subset X$ is called a {\em null set} if $F=f^{-1}(0)$ for some continuous function on $X$.

A space $X$ is called {\em perfectly $\kappa$-normal} if the closure of every open set is a null set.


\section{Locally Convex Spaces in the Weak Topology}\label{sec:wlcs}

\begin{lemma}\label{l:lvs:rg}
Let $L_w$ be a locally convex linear space $L$ with the weak topology.
The space embeds, naturally and by an isomorphic homeomorphism, into $\R^\Gamma$ as a dense linear subspace, where $\Gamma$ is a Hamel basis of the dual space $L'$.
\end{lemma}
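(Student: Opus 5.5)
We define the map $e\colon L\to\R^\Gamma$ by $e(x)=(\gamma(x))_{\gamma\in\Gamma}$, where $\Gamma\subset L'$ is a Hamel basis of the dual $L'$ of continuous linear functionals. We then check, in order, that $e$ is linear, injective, a homeomorphism onto its image when $L$ carries the weak topology, and that its image is dense.

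Linearity is immediate, since each coordinate $\gamma$ is linear. For injectivity we use that $L$ is a Hausdorff locally convex space, so the Hahn--Banach theorem makes $L'$ separate points. If $e(x)=0$, then $\gamma(x)=0$ for all $\gamma\in\Gamma$. Every $\ell\in L'$ is a finite linear combination $\ell=\sum_i c_i\gamma_i$ of elements of $\Gamma$, so $\ell(x)=0$ for all $\ell\in L'$, and hence $x=0$.

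Next we show $e$ is an embedding. The weak topology on $L$ is the initial topology generated by all $\ell\in L'$. Each coordinate of $e$ lies in $L'$, so $e$ is continuous. Conversely, each $\ell\in L'$ is a finite combination of the $\gamma\in\Gamma$, so $\ell=\lambda\circ e$ for a continuous linear functional $\lambda$ on $\R^\Gamma$ that depends on finitely many coordinates. Therefore every weakly open subbasic set $\ell^{-1}(V)$ equals $e^{-1}(\lambda^{-1}(V))$, and the weak topology coincides with the topology induced by $e$. This gives a linear homeomorphism of $L_w$ onto the linear subspace $e(L)$.

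The main step is density, and it is where the Hamel basis (linear independence) is used. A basic open set in $\R^\Gamma$ constrains only finitely many coordinates $\gamma_1,\dots,\gamma_n$. So it suffices to show that the map $L\to\R^n$, $x\mapsto(\gamma_1(x),\dots,\gamma_n(x))$, is surjective. Its image is a linear subspace of $\R^n$. If it were proper, there would exist $c\neq0$ in $\R^n$ with $\sum_i c_i\gamma_i(x)=0$ for all $x$. That means $\sum_i c_i\gamma_i=0$ in $L'$, which contradicts the linear independence of $\Gamma$. Hence every basic open set meets $e(L)$, and $e(L)$ is dense in $\R^\Gamma$.
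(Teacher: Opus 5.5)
Your proof is correct and takes the same approach as the paper: the paper also uses the evaluation map $x\mapsto(\gamma(x))_{\gamma\in\Gamma}$, but only asserts that it is the desired embedding. You supply the verifications the paper omits, namely injectivity via point separation by $L'$, equality of the weak and induced topologies because every $\ell\in L'$ is a finite combination of elements of $\Gamma$, and density from the linear independence of $\Gamma$.
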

\begin{proof}
For $v\in L_w$ and $g\in \Gamma$, put $\ph(v)(g)=g(v)$. The map $\ph\colon L_w\to \R^\Gamma$ is the desired embedding.
\end{proof}

\begin{proposition}\label{p:lvs:kappa}
Let $L_w$ be a locally convex linear space $L$ with the weak topology.
Then $L_w$ is perfectly $\kappa$-normal.
\end{proposition}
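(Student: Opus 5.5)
By Lemma~\ref{l:lvs:rg}, we may regard $L_w$ as a dense linear subspace of $\R^\Gamma$. The plan is to show that for every open $U\subset L_w$ the closure $\clx{L_w}U$ is a null set. The key fact is a classical result on products of separable spaces: every open set $W\subset\R^\Gamma$ has closure depending on countably many coordinates. That is, there is a countable $C\subset\Gamma$ such that
\[
\clx{\R^\Gamma}W=\pi_C^{-1}\bigl(\pi_C(\clx{\R^\Gamma}W)\bigr),
\]
where $\pi_C\colon\R^\Gamma\to\R^C$ is the projection, and $\pi_C(\clx{\R^\Gamma}W)$ is closed in $\R^C$. This holds because $\R^\Gamma$ has a countable Souslin number (products of separable spaces are ccc). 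The standard argument picks a maximal disjoint family of basic open sets inside $W$. This family is countable, so the union of their supports is a countable set $C$. Then one checks that the closure of $W$ equals the closure of the union of that family, and that this closure is determined by the coordinates in $C$.

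Given $U$ open in $L_w$, write $U=W\cap L_w$ with $W$ open in $\R^\Gamma$. Since $L_w$ is dense, $\clx{L_w}U=\clx{\R^\Gamma}W\cap L_w$. Take $C$ as above and set $F=\pi_C(\clx{\R^\Gamma}W)$. This is a closed subset of the metrizable space $\R^C$, so $F=g^{-1}(0)$ for some continuous $g\colon\R^C\to\R$, for instance $g=\operatorname{dist}(\cdot,F)$. Then $h=g\circ\pi_C$ restricted to $L_w$ is continuous, and
\[
h^{-1}(0)=\pi_C^{-1}(F)\cap L_w=\clx{\R^\Gamma}W\cap L_w=\clx{L_w}U.
\]
Hence $\clx{L_w}U$ is a null set, which proves that $L_w$ is perfectly $\kappa$-normal.

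The main obstacle is the countable-dependence lemma for closures of open sets in $\R^\Gamma$. I would either cite it (Engelking; see also the work of Shchepin on $\kappa$-normality of products) or prove it directly. For the direct proof, take a maximal disjoint family $\mathcal V$ of basic open boxes inside $W$; it is countable by the ccc property. Let $C$ be the union of the supports of the boxes in $\mathcal V$. Maximality gives $\clx{\R^\Gamma}W=\cl{\bigcup\mathcal V}$. Now let $x\notin\cl{\bigcup\mathcal V}$. Then some basic box $B\ni x$ misses $\bigcup\mathcal V$. Replace every coordinate of $B$ outside $C$ by all of $\R$, obtaining a box $B'$. Since each $V\in\mathcal V$ has support in $C$, $B'$ still misses every $V$. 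It follows that any $y$ with $\pi_C(y)=\pi_C(x)$ also lies outside the closure, which is exactly the required dependence on the coordinates in $C$.
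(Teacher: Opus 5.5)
Your proof is correct and follows the same route as the paper. You embed $L_w$ densely in $\R^\Gamma$, use that closures of open sets in $\R^\Gamma$ are zero sets, and pass to the dense subspace via $\clx{L_w}U=\clx{\R^\Gamma}W\cap L_w$; the paper cites Shchepin for the fact about $\R^\Gamma$, while you prove it directly via ccc and countable dependence on coordinates. The one detail you leave unstated, that $\pi_C(\clx{\R^\Gamma}W)$ is closed in $\R^C$, follows at once because $\pi_C$ is an open map and $\clx{\R^\Gamma}W$ is $\pi_C$-saturated.
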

\begin{proof}
By Lemma \ref{l:lvs:rg}, we may assume that $L_w$ is a dense linear subspace of $\R^\Gamma$, where $\Gamma$ is a Hamel basis of the dual space $L'$.
The space $\R^\Gamma$ is perfectly $\kappa$-normal \cite{Shchepin1976}. Since $L_w$ is dense in $\R^\Gamma$, $L_w$ is perfectly $\kappa$-normal.
\end{proof}

\begin{theorem}\label{t:lcs:c-emb}
Let $L_w$ be a locally convex linear space $L$ with the weak topology.
Then, for a dense subset $M\subset L_w$, the following conditions are equivalent:
\begin{enumerate}
\item
$M$ is $C^*$-embedded in $L_w$;
\item
$M$ is $C$-embedded in $L_w$;
\item
$M$ is $G_\delta$-dense in $L_w$.
\end{enumerate}
\end{theorem}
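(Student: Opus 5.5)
We prove the cycle (2)$\Rightarrow$(1)$\Rightarrow$(3)$\Rightarrow$(2). The implication (2)$\Rightarrow$(1) is trivial, since every bounded continuous function is in particular continuous. Throughout, by Lemma \ref{l:lvs:rg} we regard $L_w$ as a dense linear subspace of $\R^\Gamma$, so that $M$ is dense in $\R^\Gamma$ as well.

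For (1)$\Rightarrow$(3) we argue by contraposition. Suppose $G=\bigcap_n U_n$ is a nonempty $G_\delta$-subset of $L_w$ with $G\cap M=\es$, and pick $x_0\in G$. Using the product structure of $\R^\Gamma$, each $U_n$ contains a basic neighbourhood of $x_0$ determined by finitely many coordinates. Collecting these, we obtain a countable set $\Gamma_0\subset\Gamma$ and a continuous projection $\pi\colon L_w\to\R^{\Gamma_0}$ (a separable metrizable space) with $\pi^{-1}(\pi(x_0))\subset G$; shrinking the $U_n$ if necessary, we may assume $G=\pi^{-1}(\pi(x_0))$. Put $y_0=\pi(x_0)$ and let $d$ be a metric on $\R^{\Gamma_0}$. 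The function
\[
f(x)=\min\bigl(1,\, d(\pi(x),y_0)\bigr)
\]
is continuous on $L_w$ and vanishes exactly on $G$, so $f>0$ on $M$. Hence $g=1/f$ restricted to $M$ is continuous on $M$, and $\min(g,1)$ is bounded. Since $M$ is dense and $x_0\in\cl M$, there are points of $M$ arbitrarily close to $x_0$, where $g$ is arbitrarily large. This already rules out a continuous real extension of $g$ itself, but (1) concerns bounded functions, so we need a bounded function on $M$ with no continuous extension. Density of $M$ gives points $m\in M$ with $\pi(m)\to y_0$ along a net, and these points differ from $y_0$. Choose a sequence $y_k\to y_0$ of distinct points of $\pi(M)$ and set $h=\sin(g)$, or more carefully $\sin\bigl(1/d(\pi(x),y_0)\bigr)$ restricted to $M$. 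This function oscillates near $G$, so it has no continuous extension to $x_0$: any neighbourhood of $x_0$ contains points of $M$ where $h$ takes values near $1$ and near $-1$. To see this, observe that $\pi$ is open and $\pi(M)$ is dense in $\R^{\Gamma_0}$, so every sphere around $y_0$ of small radius meets $\pi(M\cap V)$ densely for a neighbourhood $V$ of $x_0$. We intend to verify this via a cylinder argument.

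For (3)$\Rightarrow$(2), the key tool is perfect $\kappa$-normality (Proposition \ref{p:lvs:kappa}) together with the known fact that in $\R^\Gamma$ every continuous function depends on countably many coordinates. Let $f\colon M\to\R$ be continuous. First, $M$ is $G_\delta$-dense in $L_w$ and hence in $\R^\Gamma$, since $G_\delta$-sets of $L_w$ are traces of those of $\R^\Gamma$ and $L_w$ is itself $G_\delta$-dense there by density plus linearity. We would then invoke the classical theorem (Comfort–Negrepontis, Ross–Stone type) that a $G_\delta$-dense subspace of a product of separable metrizable spaces is $C$-embedded. This yields an extension to $\R^\Gamma$, which we then restrict to $L_w$.

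The main obstacle is justifying this step without merely quoting the theorem. The alternative is to prove directly that $f$ factors through a countable subproduct: using the $G_\delta$-density of $M$ and the countable chain condition of $\R^\Gamma$, one shows that $f$ depends on countably many coordinates $\Gamma_1$, writes $f=\tilde f\circ\pi_{\Gamma_1}$ with $\tilde f$ continuous on $\pi_{\Gamma_1}(M)=\R^{\Gamma_1}$ (equality holds by $G_\delta$-density), and extends $f$ by $\tilde f\circ\pi_{\Gamma_1}$. Establishing this countable dependence is the delicate part.
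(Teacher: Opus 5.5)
Your (2)$\Rightarrow$(1) is fine, and the $\sin(1/t)$ oscillation is the right idea for (1)$\Rightarrow$(3). It genuinely uses the linear structure: perfect $\kappa$-normality alone would not suffice, since $\N$ is $C^*$-embedded but not $G_\delta$-dense in the perfectly $\kappa$-normal space $\beta\N$. Your sphere claim, however, is neither needed nor true as stated, because a dense set need not meet a given sphere. What you need is that the radii $d(\pi(m),y_0)$, $m\in M\cap V$, are dense in some interval $[0,\delta)$. This holds because $\pi(M\cap V)$ is dense in the open set $\pi(\tilde V)\ni y_0$, where $\tilde V$ is open in $\R^\Gamma$ with $\tilde V\cap L_w=V$. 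Alternatively, convex neighbourhoods in $L_w$ are connected.

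The genuine gap is in (3)$\Rightarrow$(2).

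\emph{The main route fails.} It rests on ``$L_w$ is $G_\delta$-dense in $\R^\Gamma$ by density plus linearity'', and this is false: linear independence lets you prescribe the values of finitely many functionals, not countably many. Take $L=\ell^2$ and a Hamel basis $\Gamma$ of $L'=\ell^2$ containing the coordinate functionals $e_n^*$. Then the nonempty $G_\delta$-set $\{s\in\R^\Gamma: s(e_n^*)=1 \text{ for all } n\}$ misses $L_w$.

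\emph{No route through all of $\R^\Gamma$ can work.} The set $M=L_w$ satisfies (3) trivially. Yet $x\mapsto 1/\sum_n 2^{-n}\min(1,|e_n^*(x)-1|)$ is continuous on $L_w$ and unbounded near that $G_\delta$-set, so it has no continuous extension to $\R^\Gamma$.

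\emph{Your fallback has the same flaw.} The equality $\pi_{\Gamma_1}(M)=\R^{\Gamma_1}$ is false. What (3) does give is $\pi_{\Gamma_1}(M)=\pi_{\Gamma_1}(L_w)$, since each fibre $\pi_{\Gamma_1}^{-1}(t)\cap L_w$ with $t\in\pi_{\Gamma_1}(L_w)$ is a nonempty $G_\delta$-set of $L_w$. That is enough for $\tilde f\circ\pi_{\Gamma_1}$ to be a continuous extension over $L_w$. But you leave the factorization itself unproved: countably many coordinates \emph{with} $\tilde f$ continuous on $\pi_{\Gamma_1}(M)$. This is exactly the paper's Theorem \ref{t:lcs:}, via Arhangel'skii's factorization lemma for dense subsets of $\R^\Gamma$, and it needs no $G_\delta$-density.

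\emph{The paper's route.} The paper uses perfect $\kappa$-normality of $L_w$ (Proposition \ref{p:lvs:kappa}), which you name as the key tool but never use. It combines this with results of \cite{at2009} saying that $G_\delta$-dense subspaces of such spaces are $C$-embedded. That step is short.
\begin{itemize}
\item Take continuous $f\colon M\to[0,1]$, and choose open $\tilde U,\tilde V\subset L_w$ whose traces on $M$ are $\{f<1/3\}$ and $\{f>2/3\}$.
\item By density of $M$, $\cl{\tilde U}\cap M\subset\{f\le 1/3\}$ and $\cl{\tilde V}\cap M\subset\{f\ge 2/3\}$.
\item By perfect $\kappa$-normality, $\cl{\tilde U}\cap\cl{\tilde V}$ is a zero-set; it misses $M$, so it is empty by (3).
\item Hence completely separated sets in $M$ remain completely separated in $L_w$, so $M$ is $C^*$-embedded.
\item $M$ is then $C$-embedded, because no nonempty zero-set of $L_w$ is disjoint from $M$.
\end{itemize}
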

\begin{proof}
By Proposition \ref{p:lvs:kappa}, $L_w$ is perfectly $\kappa$-normal.
The theorem now follows from Lemma 9.9.35, Theorem 6.1.5, Proposition 6.1.6, Theorem 6.1.7, Lemma 8.4.5, and Theorem 9.9.36 of \cite{at2009}.
\end{proof}

\begin{theorem}\label{t:lcs:}
Let $L_w$ be a locally convex linear space $L$ with the weak topology and let $M\subset L_w$ be a dense subset.
For every continuous function $f\colon M\to \R$ there exists a sequence $(g_n)_{n\in\om}\subset L'$ of continuous functionals such that $f=h\circ g$, where
\[
g\colon M\to \R^\om,\ g(v)(n)=g_n(v)
\]
and $h\colon g(M)\to \R$ is some continuous function.
\end{theorem}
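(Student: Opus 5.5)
We reduce the statement to the classical fact that a continuous function on a dense subset of a product of lines depends on countably many coordinates. By Lemma~\ref{l:lvs:rg}, $L_w$ sits as a dense linear subspace of $\R^\Gamma$, where $\Gamma$ is a Hamel basis of $L'$ and the coordinates are $v\mapsto \gamma(v)$ for $\gamma\in\Gamma$. Since $M$ is dense in $L_w$, it is also dense in $\R^\Gamma$. So we are given a continuous $f\colon M\to\R$ on a dense subset $M$ of $\R^\Gamma$.

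The key step is the factorization theorem for products of second countable spaces: if $M$ is dense in $\R^\Gamma$ and $f\colon M\to\R$ is continuous, there is a countable $\Gamma_0=\{\gamma_n:n\in\om\}\subset\Gamma$ such that $f(x)=f(y)$ whenever $x,y\in M$ and $x|_{\Gamma_0}=y|_{\Gamma_0}$. Moreover, the induced function $h$ on $\pi_{\Gamma_0}(M)$, defined by $h(\pi_{\Gamma_0}(x))=f(x)$, is continuous. This is the Gleason--Mibu--Shchepin dependence on countably many coordinates; it is of a piece with the perfect $\kappa$-normality from Proposition~\ref{p:lvs:kappa}, and it can be cited from \cite{at2009} or \cite{Shchepin1976}.

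If we prove it rather than cite it, the plan is as follows.
\begin{itemize}
\item For each rational interval $I$, the set $\cl{f^{-1}(I)}$ (closure in $\R^\Gamma$) is the closure of an open set. By perfect $\kappa$-normality together with the Souslin property of $\R^\Gamma$, such a regular closed set is determined by countably many coordinates.
\item Collect the countably many coordinate sets over all rational $I$ into $\Gamma_0$.
\item Deduce that $f(x)$ is determined by $x|_{\Gamma_0}$: if $x|_{\Gamma_0}=y|_{\Gamma_0}$ but $f(x)<a<b<f(y)$ with $a,b$ rational, then $x$ lies in the interior of the closure of $f^{-1}(-\infty,a)$. That set is $\Gamma_0$-determined, so it contains $y$, and it is disjoint from $f^{-1}(b,\infty)$, a contradiction.
\item For continuity of $h$, use the same sets: $h^{-1}$ of a rational interval is the trace on $\pi_{\Gamma_0}(M)$ of the open $\pi_{\Gamma_0}$-image of an interior, and projections of products are open maps.
\end{itemize}
This step is the main obstacle. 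The delicate points are the interior-versus-closure bookkeeping and checking that $h$ is continuous on the image and not merely well defined.

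Given $\Gamma_0$, set $g_n=\gamma_n\in L'$ and $g(v)(n)=g_n(v)$. Then $g=\pi_{\Gamma_0}\circ\ph$ restricted to $M$, so $g(M)=\pi_{\Gamma_0}(M)\subset\R^\om$ and $f=h\circ g$ with $h$ continuous on $g(M)$. If $\Gamma_0$ is finite, repeat coordinates to get a sequence.
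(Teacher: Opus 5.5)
Your proposal is correct and is essentially the paper's proof. You embed $L_w$ densely in $\R^\Gamma$ by Lemma~\ref{l:lvs:rg}, observe that $M$ is then dense in $\R^\Gamma$, apply the countable-coordinates factorization lemma (the paper cites \cite[Lemma 0.2.3]{Arhangelskii1989CpBook}), and enumerate the resulting countable set of coordinates as $(g_n)_n$.

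One small correction to your optional sketch of that lemma. Let $W_I$ be the interior of $\cl{f^{-1}(I)}$. The claimed equality $h^{-1}(I)=\pi_{\Gamma_0}(W_I)\cap\pi_{\Gamma_0}(M)$ can fail at the endpoints of $I$. What does hold is $h^{-1}(I)\subset\pi_{\Gamma_0}(W_I)\cap\pi_{\Gamma_0}(M)\subset h^{-1}(\cl I)$, and this already gives continuity of $h$.
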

\begin{proof}
By Lemma \ref{l:lvs:rg}, we may assume that $L_w$ is a dense linear subspace of $\R^\Gamma$, where $\Gamma$ is a Hamel basis of the dual space $L'$.
Then $M$ is dense in $\R^\Gamma$. By the factorization lemma 0.2.3 of \cite{Arhangelskii1989CpBook}, there exists an at most countable set $G\subset \Gamma$ such that $f=p \circ \pi_G$, where
\[
\pi_G\colon \R^\Gamma\to \R^G,\ \pi_G(s)(v)=s(v)
\]
is the projection and $p\colon \pi_G(M)\to \R$ is some continuous function. Enumerate the set $G$ as $G=\{g_n:n\in\om\}$. The sequence $(g_n)_{n\in\om}$ is the one required.
\end{proof}

\begin{cor}\label{c:lcs:}
Let $L_w$ be a locally convex linear space $L$ with the weak topology.
For every continuous function $f\colon L_w\to \R$ there exists a sequence $(g_n)_{n\in\om}\subset L'$ of continuous functionals such that $f=h\circ g$, where
\[
g\colon L_w\to \R^\om,\ g(v)(n)=g_n(v)
\]
and $h\colon g(L_w)\to \R$ is some continuous function.
\end{cor}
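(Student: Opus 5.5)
This is the special case $M=L_w$ of Theorem~\ref{t:lcs:}, so no new argument should be needed. The plan is simply to check that $L_w$, regarded as a subset of itself, satisfies the hypotheses of that theorem, and that its conclusion is literally the conclusion of the corollary.

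First, $L_w$ is trivially dense in itself, so Theorem~\ref{t:lcs:} applies with $M=L_w$. Given a continuous $f\colon L_w\to\R$, that theorem produces a sequence $(g_n)_{n\in\om}\subset L'$ and a continuous $h\colon g(L_w)\to\R$ with $f=h\circ g$, where $g(v)(n)=g_n(v)$. This is exactly the required factorization.

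If one wants to avoid quoting the theorem, the same argument can be run directly:
\begin{enumerate}
\item Embed $L_w$ as a dense linear subspace of $\R^\Gamma$ via Lemma~\ref{l:lvs:rg}.
\item Apply the factorization lemma 0.2.3 of \cite{Arhangelskii1989CpBook}. A continuous real function on a dense subspace of a product of second countable spaces depends on countably many coordinates, so we obtain a countable $G\subset\Gamma$ with $f=p\circ\pi_G$.
\item Enumerate $G$ as $\{g_n:n\in\om\}$ and set $h=p$.
\end{enumerate}

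There is no real obstacle here. The only point to watch is the bookkeeping when $G$ is finite: in that case the enumeration can simply repeat elements, or be padded with the zero functional, which does not affect continuity of $h$.
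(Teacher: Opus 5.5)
Your proposal is correct and matches the paper. The paper gives no separate proof of this corollary: it is exactly the special case $M=L_w$ of Theorem~\ref{t:lcs:}, since $L_w$ is trivially dense in itself. Your remark about repeating or padding the enumeration when $G$ is finite is harmless bookkeeping that the paper leaves implicit.
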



\section{Proofs}\label{sec:proofs}
Let us describe the main steps of the proof.

\begin{proof}[Proof of Theorem \ref{t:main:1}]
We assume that $p\neq q$.

Theorem \ref{t:lcs:c-emb} implies the equivalence of items (i), (ii), and (iii) of Theorem \ref{t:main:1}.

(iv)$\lrarr$(v) is easily verified.

(iii)$\lrarr$(v)
We show that $M_q(Y)$ has a countable pseudocharacter.
If $q=a$, then $M_p(X)\subset M_\sigma(X)$, and $M_\sigma(X)$ is $G_\delta$-dense in $M(\bt X)$. Consequently, $X$ is pseudocompact. Then $Y$ is a metrizable compactum and $M_q(Y)=M(Y)$ has a countable network. If $q\neq a$, then $M_q(Y)\subset M_\tau(Y)$ has a countable network.

Since $M_p(X)$ is $G_\delta$-dense in $M_q(X)$, it follows that $M(\bt f)(M_p(X))$ is $G_\delta$-dense in $M(\bt f)(M_q(X))\subset M_q(Y)$. Consequently, $M(\bt f)(M_p(X))=M(\bt f)(M_q(X))\subset M_q(Y)$.

(iv)$\lrarr$(iii) Assume the contrary, that $M_p(X)$ is not $G_\de$-dense in $M_{q}(X)$.
Let $G\subset M_{q}(X)\setminus M_p(X)$ be a nonempty null set and $\mu\in G$. Then there exists a sequence of functions $(h_n)_n\subset C(\bt X)$ such that $F=\bigcap_n h^{-1}_n(h_n(\mu))\subset G$.
Since $F\subset M_{q}(X)\setminus M_p(X)$, condition (iv) is violated.
\end{proof}

Theorem \ref{t:pmpu:3} follows from the fact that $P_q(X)$ is $C$-embedded ($C^\ast$-embedded) in $P_p(X)$ if and only if $M^+_q(X)$ is $C$-embedded ($C^\ast$-embedded) in $M^+_p(X)$.

\bibliographystyle{plain}


\begin{thebibliography}{10}

\bibitem{Banakh1995ru}
T. Banakh.
\newblock {The topology of spaces of probability measures, I: Functors $P_\tau$ and $\hat P$.}
\newblock {\em Mat. Studii}, 5(1-2):65–87, 1995.

\bibitem{Bogachev2024dan-ru}
V.~I. Bogachev.
\newblock Compactification of Spaces of Measures and Pseudocompactness.
\newblock {\em Doklady Akademii Nauk}, 518:75–79, 2024.

\bibitem{Bogachev2024-1ru}
V.I. Bogachev.
\newblock On the Compactification of Spaces of Measures.
\newblock {\em Functional Analysis and Its Applications}, 58(1):4--21, 2024.

\bibitem{Reznichenko2025betap}
E.~A. Reznichenko.
\newblock Stone--\v{C}ech compactifications of spaces of probability measures.
\newblock {\em Izvestiya Rossiiskoi Akademii Nauk. Seriya Matematicheskaya}, to appear.

\bibitem{Sadovnichii1999}
Yu.~V. Sadovnichiy.
\newblock {On Some Categorical Properties
of the Functor $U_\tau$}.
\newblock {\em Bulletin of Moscow University.
Series 1: Mathematics. Mechanics}, (3):38--42, 1999.

\bibitem{Sadovnichii2007}
Yu.~V. Sadovnichiy.
\newblock {On topological and categorical properties of the functors $M_\tau$ and $M_R$}.
\newblock {\em Doklady Akademii Nauk}, 414(1):24--27, 2007.

\bibitem{SadovnichiiFedorchuk2008}
Yu.~V. Sadovnichiy and V.~V. Fedorchuk.
\newblock {The functor $M_\tau$, Lipschitz and uniformly
continuous mappings}.
\newblock {\em Bulletin of Moscow University.Series 1: Mathematics. Mechanics}, (1):8--12, 2008.

\bibitem{SadovnichiiFedorchuk1999}
V.~V. Fedorchuk and Yu.~V. Sadovnichiy.
\newblock On Some Topological and Category Properties of Alternating Measures.
\newblock {\em Fundamental and Applied Mathematics}, 5(2):597–618, 1999.

\bibitem{EngelkingBookRu}
R. Engelking.
\newblock {\em General Topology}.
\newblock PWN - Panstwowe Wydawnictwo Naukowe, Warszawa, 1977.

\bibitem{Arhangelskii1989CpBook}
A.~V. Arhangel'skii.
\newblock {\em Topological Function Spaces}.
\newblock Moscow State University, Moscow, 1989; English translation: Kluwer Academic Publishers, Dordrecht, 1992.


\bibitem{AngoaOrtiz-CastilloTamariz-Mascarua2014}
J.~Angoa, Y.F.~Ortiz-Castillo, and A.~Tamariz-Mascar{\'u}a.
\newblock Ultrafilters and properties related to compactness.
\newblock In {\em Topology Proc}, volume~43, pages 183--200, 2014.

\bibitem{AACCICTM2018}
J.~Angoa-Amador, A.~Contreras-Carreto, M.~Ibarra-Contreras, and
  A.~Tamariz-Mascar{\'u}a.
\newblock Basic and classic results on pseudocompact spaces.
\newblock In {\em {Pseudocompact Topological Spaces: A Survey of Classic and
  New Results with Open Problems}}, pages 1--38. Springer, 2018.

\bibitem{at2009}
A. Arhangel'skii and M. Tkachenko.
\newblock {\em {Topological Groups and Related Structures}}.
\newblock Atlantis Press, 2008.

\bibitem{BanakhChigogidzeFedorchuk2003}
T. Banakh, A. Chigogidze, and V. Fedorchuk.
\newblock {On spaces of $\sigma$-additive probability measures}.
\newblock {\em Topology and its Applications}, 133(2):139--155, 2003.

\bibitem{Bogachev2007Measure_theory}
V.~I. Bogachev.
\newblock {\em Measure theory. {Vol}. {I} and {II}}.
\newblock Berlin: Springer, 2007.

\bibitem{BogachevSmolyanovSobolev2017en}
V.~I. Bogachev, O.~G. Smolyanov, and V.I.~Sobolev.
\newblock {\em Topological vector spaces and their applications}.
\newblock Springer, 2017.

\bibitem{Garcia-FerreiraOrtiz-Castillo2018}
S.~Garc{\'\i}a-Ferreira and Y.F.~Ortiz-Castillo.
\newblock Pseudocompactness and ultrafilters.
\newblock In {\em {Pseudocompact Topological Spaces: A Survey of Classic and
  New Results with Open Problems}}, pages 77--105. Springer, 2018.

\bibitem{AngoaOrtiz-CastilloTamariz-Mascarua2013}
Á. Tamariz-Mascarúa J.~Angoa, Y. F. Ortiz-Castillo.
\newblock Compact-like properties in hyperspaces.
\newblock {\em Matematički Vesnik}, 65(253):306--318, 2013.

\bibitem{Knowles1967}
J.D.~Knowles.
\newblock On the existence of non-atomic measures.
\newblock {\em Mathematika}, 14(1):62--67, 1967.

\bibitem{Koumoullis1981}
G. Koumoullis.
\newblock Some topological properties of spaces of measures.
\newblock {\em Pacific Journal of Mathematics}, 96(2):419--433, 1981.

\bibitem{Rudin1957}
Walter Rudin.
\newblock Continuous functions on compact spaces without perfect subsets.
\newblock {\em Proceedings of the American Mathematical Society}, 8(1):39--42,
  1957.

\bibitem{Sadovnichii2000}
Yu~V. Sadovnichiy.
\newblock {On some categorical properties of the functor $U_R$}.
\newblock {\em Topology and its Applications}, 107:131--145, 2000.

\bibitem{Sadovnichii2020}
Y.~V. Sadovnichiy.
\newblock On alternating measure functors.
\newblock {\em Topology and its Applications}, 2020.

\bibitem{Shchepin1976}
E.~V. Shchepin.
\newblock Topology of limit spaces of uncountable inverse spectra.
\newblock {\em Russian mathematical surveys}, 31(5):155, 1976.

\bibitem{Varadarajan1965measures}
V.S.~Varadarajan.
\newblock Measures on topological spaces.
\newblock {\em Amer. Math. Soc. Transl}, 48(2):161--228, 1965.

\bibitem{Vaughan1984handbook}
J.~E. Vaughan.
\newblock Countably compact and sequentially compact spaces.
\newblock In {\em Handbook of set-theoretic topology}, pages 569--602.
  Elsevier, 1984.

\end{thebibliography}

\end{document}